\definecolor{darkgreen}{rgb}{0,0.5,0}
\definecolor{darkblue}{rgb}{0,0,0.7}
\definecolor{darkred}{rgb}{0.9,0.1,0.1}
\newtheorem{proposition}{Proposition}
\newtheorem{theorem}[proposition]{Theorem}
\newtheorem{lemma}[proposition]{Lemma}
\newtheorem{corollary}[proposition]{Corollary}
\newtheorem{remark}[proposition]{Remark}
\theoremstyle{remark}
\theoremstyle{definition}
\newtheorem{definition}[proposition]{Definition}
\numberwithin{equation}{section}
\numberwithin{proposition}{section}
\numberwithin{figure}{section}
\numberwithin{table}{section}
\newcommand{\N}{\mathbb{N}}
\newcommand{\R}{\mathbb{R}}
\newcommand{\E}{\mathbb{E}}
\newcommand{\ellipt}{\mathsf{Ellipt}}
\newcommand{\law}{\mathsf{law}}
\renewcommand{\le}{\leqslant}
\renewcommand{\ge}{\geqslant}
\renewcommand{\leq}{\leqslant}
\renewcommand{\geq}{\geqslant}
\renewcommand{\subset}{\subseteq}
\renewcommand{\bar}{\overline}
\newcommand{\Ll}{\left}
\newcommand{\Rr}{\right}
\renewcommand{\d}{\mathrm{d}}
\newcommand{\1}{\mathbf{1}}
\newcommand{\mcl}{\mathcal}
\newcommand{\msc}{\mathscr}
\newcommand{\eps}{\varepsilon}
\newcommand{\si}{\sigma}
\newcommand{\Id}{\mathrm{Id}}
\newcommand{\upa}{\uparrow}
\newenvironment{e}{\begin{equation}}{\end{equation}\ignorespacesafterend}
\newenvironment{e*}{\begin{equation*}}{\end{equation*}\ignorespacesafterend}
\newenvironment{a*}{\begin{align*}}{\end{align*}\ignorespacesafterend}
\begin{document}

\author{Hong-Bin Chen\,\orcidlink{0000-0001-6412-0800}}
\address[Hong-Bin Chen]{Institut des Hautes Études Scientifiques, Bures-sur-Yvette, France}
\email{\href{mailto:hongbin.chen@nyu.edu}{hongbin.chen@nyu.edu}}

\author{Victor Issa\,\orcidlink{0009-0009-1304-046X}}
\address[Victor Issa]{Department of Mathematics, ENS de Lyon, Lyon, France}
\email{\href{mailto:victor.issa@ens-lyon.fr}{victor.issa@ens-lyon.fr}}

\keywords{}
\subjclass[2010]{}

\title[Uniqueness of Parisi measures]{Uniqueness of Parisi measures for enriched convex vector spin glass}

\begin{abstract}
    In the PDE approach to mean-field spin glasses, it has been observed that the free energy of convex spin glass models could be enriched by adding an extra parameter in its definition, and that the thermodynamic limit of the enriched free energy satisfies a partial differential equation. This parameter can be thought of as a matrix-valued path, and the usual free energy is recovered by setting this parameter to be the constant path taking only the value $0$. Furthermore, the enriched free energy can be expressed using a variational formula, which is a natural extension of the Parisi formula for the usual free energy. 

    For models with scalar spins the Parisi formula can be expressed as an optimization problem over a convex set, and it was shown in \cite{auffinger2015parisi} that this problem has a unique optimizer thanks to a strict convexity property. For models with vector spins, the Parisi formula cannot easily be written as a convex optimization problem.
    
    In this paper, we generalize the uniqueness of Parisi measures proven in \cite{auffinger2015parisi} to the enriched free energy of models with vector spins when the extra parameter is a strictly increasing path. Our approach relies on a Gateaux differentiability property of the free energy and the envelope theorem.

    \bigskip

    \noindent \textsc{Keywords and phrases: Mean-field spin glasses, Parisi formula, Parisi measures, Hamilton-Jacobi equations}  

    \medskip

    \noindent \textsc{MSC 2020: 35D40, 60K35, 82B44, 82D30}

\end{abstract}

\maketitle

\newpage 
\thispagestyle{empty}
{
  \hypersetup{linkcolor=black}
  \tableofcontents
}

\newpage
\pagenumbering{arabic}
\section{Introduction}
\subsection{Preamble}
In this paper, we study centered Gaussian processes $H_N$ on $(\R^N)^D$ with the following covariance structure,
\begin{e}\label{e.H_N(sigma)=}
    \E H_N(\sigma) H_N(\tau) = N \xi \left( \frac{\sigma \tau^*}{N} \right),
\end{e}
where $\xi \in \mcl C^\infty(\R^{D \times D},\R)$ is a fixed smooth function and where $\sigma \tau^*$ denotes the matrix of scalar products 
\begin{e} \label{e.overlapmatrix}
   \sigma \tau^*  = (\sigma_d \cdot \tau_{d'})_{1 \leq d,d' \leq D} \in \R^{D \times D},
\end{e}
with $\sigma_d = (\sigma_{di})_{1 \leq i \leq N}$. 

We often identify $(\R^N)^D$ with $(\R^D)^N$ and also $\R^{D \times N}$, the set of $D \times N$ matrices, which makes the notation for \eqref{e.overlapmatrix} natural. 
Let $S^D$, $S^D_+$, and $S^D_{++}$ denote the set of $D \times D$ symmetric matrices, positive semi-definite $D \times D$ symmetric matrices, and positive definite $D \times D$ symmetric matrices respectively. For $a,b \in S^D_+$, we write $a \geq b$ whenever $a-b \in S^D_+$.

Throughout, we will assume that $\xi$ admits a convergent power series and is convex on $S^D_+$. When $D = 1$, this setup corresponds to the usual setup for mean-field spin glass models with scalar spins as studied in \cite{pan}. For $D > 1$, this setup corresponds to mean-field spin glass models with $D$-dimensional vector spins. Multi-species models \cite{pan.multi} and the Potts model \cite{pan.potts} are examples of mean-field spin glass models with vector spins.

We give ourselves $P_1$, a compactly supported probability measure on $\R^D$. Without loss of generality, we may assume that the support is included in the unit ball of $\R^D$. We let $P_N = P_1^{\otimes N}$ denote the law of an element $\sigma \in (\R^D)^N$ with independent rows $\sigma_i =(\sigma_{di})_{1 \leq d \leq D} \in \R^D$ with law $P_1$. We are interested in the large-$N$ limit of 
\begin{e} \label{e.free energy}
    \bar F_N(t,0) = -\frac{1}{N} \E \log \int \exp \left( \sqrt{2t} H_N(\sigma) - t N \xi \left( \frac{\sigma \sigma^*}{N}\right) \right) \d P_N(\sigma),
\end{e}
where $t \ge 0$. The term $\xi\Ll( \frac{\si \si^*}{N} \Rr)$ in \eqref{e.free energy} is introduced as a convenience to simplify the expression of the limit; it is constant in classical cases of interest, such as when the coordinates of $\sigma$ take values in $\{-1,1\}$ and $\xi$ depends only on the diagonal entries of its argument. In general, the second argument of $\bar F_N$ can be any $q$ in the space of nondecreasing functions from $[0,1)$ to $S^D_+$, subject to a mild integrability and continuity requirement; the expression in \eqref{e.free energy} is with this argument chosen to be the constant path taking only the value $0 \in S^D_+$. To explain what this space is, let us say that a path $q : [0,1) \to S^D_+$ is nondecreasing if, for every $u \le v \in [0,1)$, we have $q(v) - q(u) \in S^D_+$. We then let 
\begin{e}\label{e.Q=}
    \mcl Q = \{ q : [0,1) \to S^D_+\ \big| \ \text{$q$ is nondecreasing and càdlàg}\}
\end{e}
and we set $\mcl Q_p = \mcl Q \cap L^p([0,1);S^D)$. We postpone the precise definition of $\bar F_N(t,q)$ for arbitrary $q \in \mcl Q_2$ to \eqref{e.enriched free energy}. In short, this quantity is obtained by adding an energy term in the exponential on the right side of \eqref{e.free energy} to encode the interaction of $\sigma$ with an external magnetic field, and this external magnetic field has an ultrametric structure whose characteristics are encoded by the path $q$.
Therefore, we call $\bar F_N(t,q)$ the \textbf{enriched free energy}.

One can check \cite[Proposition~3.1]{mourrat2019parisi} that $\bar F_N(0,\cdot)$ does not depend on $N$. This follows from the fact $P_N = P_1^{\otimes N}$ and that at $t = 0$ the $N$-body Hamiltonian has the same law as $N$ copies of the $1$-body Hamiltonian. For every $q \in \mcl Q_2$, we write
\begin{equation}  \label{e.psi=}
    \psi(q) = \bar F_1(0,q) = \bar F_N(0,q). 
\end{equation}
When instead $P_N$ is the uniform measure on the sphere of radius $\sqrt{N}$ centered at $0$ in $(\mathbb{R}^D)^N$, $\bar F_N(0,\cdot)$ depends on $N$ but converges to a differentiable function of $q$ as $N \to +\infty$ \cite[Proposition~3.1]{mourrat2019parisi}. In what follows, we focus on models with $P_N = P_1^{\otimes N}$.

When $\xi$ is convex on $S^D_+$, the limiting value of $\bar F_N(t,0)$ is known, this is the celebrated Parisi formula. The Parisi formula was first conjectured in \cite{parisi1979infinite} using a sophisticated non-rigorous argument known as the replica method. The convergence of the free energy as $N \to +\infty$ was rigorously established in \cite{guerra2002} in the case of the so-called Sherrington-Kirkpatrick model which corresponds to $D = 1$, $\xi(x) =\beta^2 x^2$ for $\beta>0$, and $P_1 = \mathrm{Unif}(\{-1,1\})$. The Parisi formula for the Sherrington-Kirkpatrick model was then proven in \cite{gue03,Tpaper}. This was extended to the case $D = 1$, $P_1 = \mathrm{Unif}(\{-1,1\})$ and $\xi(x) = \sum_{p \geq 1} \beta^2_p x^p$ with $\beta_p \geq 0$ in \cite{pan}. Some models with $D > 1$ such as multi-species models, the Potts model, and a general class of models with vector spins were treated in \cite{pan.multi,pan.potts,pan.vec}, under the assumption that $\xi$ is convex on $\R^{D \times D}$. Finally, the case $D > 1$ was treated in general in \cite{chenmourrat2023cavity} assuming only that $\xi$ is convex on the set of positive semi-definite matrices. In addition, following \cite{chenmourrat2023cavity, mourrat2020extending}, the Parisi formula is extended to $\lim_{N \to +\infty} \bar F_N(t,q)$ for $q  \in \mcl Q_2$. The following version of the Parisi formula is \cite[Proposition~8.1]{chenmourrat2023cavity}.
Throughout, under the assumption that $\xi$ is convex on $S^D_+$, for every $t\geq 0$ and $q\in \mcl Q_2$, we set
\begin{align}\label{e.f=}
    f(t,q) = \lim_{N\to\infty} \bar F_N(t,q).
\end{align}
We view $f$ as a real-valued function on $[0,+\infty)\times \mcl Q_2$ and we often consider $f$ restricted to a smaller domain. We set
\begin{gather}
    L^\infty_{\leq 1} = \Ll\{p:[0,1)\to S^D \ \big| \ \text{$|p(u)| \leq 1$ almost everywhere}\Rr\},\label{e.L^infty_<1}
    \\
    \theta(x) = x \cdot \nabla \xi(x) - \xi(x). \label{e.theta=}
\end{gather}
\begin{theorem} [Generalized Parisi formula \cite{chenmourrat2023cavity}] \label{t.parisi}
    If $\xi$ is convex on $S^D_+$, then, at every $t \geq 0$ and $q \in \mcl Q_2$, the limit of the free energy $\bar F_N(t,q)$ is given by
\begin{e} \label{e.parisi}
        f(t,q) = \sup_{p \in \mcl Q \cap L^\infty_{\leq 1}} \left\{ \psi(q + t \nabla \xi \circ p )  - t \int_0^1 \theta(p(u)) \d u \right\}.    
    \end{e}
\end{theorem}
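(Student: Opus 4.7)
The plan is to prove two matching one-sided bounds. For the lower bound $f(t,q)\ge \sup_p\{\cdots\}$, I would fix $p\in\mcl Q\cap L^\infty_{\le 1}$ and run a Guerra-type interpolation between the original disorder $\sqrt{2t}\,H_N(\sigma)-tN\xi(\sigma\sigma^*/N)$ and a hierarchical cavity field built on a Ruelle probability cascade whose overlap structure is dictated by $p$, with an additional static external field encoding $q$. Differentiating $\bar F_N$ along the interpolation, using Gaussian integration by parts and the overlap identity \eqref{e.overlapmatrix}, produces a remainder proportional to
\begin{e*}
\xi\Ll(\tfrac{\sigma\tau^*}{N}\Rr)-\xi(p(u))-\nabla\xi(p(u))\cdot\Ll(\tfrac{\sigma\tau^*}{N}-p(u)\Rr)
\end{e*}
integrated against a positive measure in $u$ and in two replicas $\sigma,\tau$ drawn from the Gibbs measure. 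Since $\sigma\tau^*/N\in S^D_+$ and $\xi$ is convex on $S^D_+$, this remainder is nonnegative; the overall minus sign in \eqref{e.free energy} converts monotonicity of $\bar F_N$ along the interpolation into the finite-$N$ inequality $\bar F_N(t,q)\ge \psi(q+t\nabla\xi\circ p)-t\int_0^1 \theta(p(u))\,\d u$. Sending $N\to\infty$ and taking the supremum over $p$ yields $\liminf_N \bar F_N(t,q)\ge \sup_p\{\cdots\}$.

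For the matching upper bound $\limsup_N \bar F_N(t,q)\le \sup_p\{\cdots\}$, I would combine an Aizenman--Sims--Starr cavity identity with Panchenko's ultrametricity theorem and a vector-spin synchronization argument. Writing $\bar F_{N+1}(t,q)-\bar F_N(t,q)$ as a cavity contribution minus an onsite contribution and passing to the limit along a subsequence yields a representation of the limit in terms of an asymptotic Gibbs measure whose array of overlap matrices $(\sigma\tau^*/N)$ is Ruelle-cascade structured by ultrametricity. The synchronization property---available for $\xi$ convex only on $S^D_+$ thanks to the positivity of the overlap matrix in \eqref{e.overlapmatrix}---forces this $S^D$-valued overlap array to be a deterministic monotone function of a single scalar overlap, and hence to be encoded by some $p^\star\in\mcl Q\cap L^\infty_{\le 1}$. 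Substituting $p^\star$ into the cavity identity produces exactly $\psi(q+t\nabla\xi\circ p^\star)-t\int_0^1\theta(p^\star(u))\,\d u$, bounding the limsup above by this specific value and therefore by the supremum. Together with the first step this also gives existence of the limit in \eqref{e.f=}.

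The main obstacle is the upper bound, and within it the synchronization step. When $\xi$ is convex on all of $\R^{D\times D}$ one can invoke Panchenko's original synchronization theorem for vector-spin models directly; under the weaker hypothesis that $\xi$ is convex only on $S^D_+$, off-diagonal entries of the overlap matrix are not constrained by a variational principle on the whole space, and one must exploit the a priori positivity of $\sigma\tau^*/N$ coming from \eqref{e.overlapmatrix} to reduce to the cone where convexity holds, as carried out in \cite{chenmourrat2023cavity}. A secondary difficulty is propagating the path parameter $q$ faithfully through both bounds: the Guerra interpolation must begin from the enriched base point $\psi(q+\,\cdot\,)$ rather than $\psi(\,\cdot\,)$, and the cavity computation must track the external field encoding $q$ alongside the Gaussian disorder. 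Both are handled by working throughout with $S^D_+$-valued monotone path encodings rather than scalar ones.
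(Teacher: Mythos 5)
Note that this theorem is not proved in the paper; it is imported as \cite[Proposition~8.1]{chenmourrat2023cavity}, so there is no in-paper argument to compare against. Your sketch outlines the standard route of a Guerra interpolation for one bound and an Aizenman--Sims--Starr cavity argument combined with ultrametricity and synchronization for the other, which is indeed the underlying machinery in the cited work.

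There is, however, a genuine gap in your Guerra (lower-bound) step. You assert that "$\sigma\tau^*/N\in S^D_+$" and invoke convexity of $\xi$ on $S^D_+$ to conclude that the remainder is non-negative. This is false: for two distinct replicas $\sigma$ and $\tau$, the cross-overlap $\sigma\tau^*/N$ is in general \emph{not symmetric} (its transpose is $\tau\sigma^*/N$, which differs from it), hence not in $S^D_+$; only the self-overlap $\sigma\sigma^*/N$ is positive semi-definite. Consequently, convexity of $\xi$ on $S^D_+$ alone does not give pointwise non-negativity of $\xi(R_{12})-\xi(p(u))-\nabla\xi(p(u))\cdot(R_{12}-p(u))$. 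Overcoming this is precisely the non-trivial content of \cite{mourrat2020free} and the relevant parts of \cite{chenmourrat2023cavity}, which use symmetrization, the monotonicity of $\nabla\xi$ on the cone $S^D_+$, and the structure of the power series of $\xi$. You correctly flag that weak convexity complicates the synchronization step, but overlook that it already obstructs the naive Guerra computation.

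A secondary inaccuracy: you describe the $q$-enrichment as "an additional static external field encoding $q$." As defined in Section~2, the enrichment by $q\in\mcl Q_2$ is implemented by a Gaussian field $w^q(\alpha)$ with hierarchical covariance $\E[w^q(\alpha)w^q(\alpha')^*]=Nq_{\alpha\wedge\alpha'}$ riding on a Poisson--Dirichlet cascade; it is itself an ultrametric cascade field, not a static one, unless $q$ is constant. In the Guerra interpolation the $q$-cascade and the $p$-cascade must be merged into a single cascade associated to the path $q+t\nabla\xi\circ p$, and the monotonicity of that combined path (needed for the cascade to make sense) relies on the monotonicity of $\nabla\xi$ on $S^D_+$ recorded below~\eqref{e.xi_monotonicity}—a point your sketch leaves implicit.
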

Setting $q = 0$ in \eqref{e.parisi}, we recover the usual Parisi formula as presented in \cite{pan} for example. Letting $\xi^* : \R^{D \times D} \to \R \cup\{+\infty\}$ denote the convex dual of $\xi$ with respect to the cone $S^D_+$, that is $\xi^*$ is the function defined by
\begin{equation} \label{e.def.xi.star}
    \xi^*(a) = \sup_{b \in S^D_+} \Ll\{a \cdot b - \xi(b)\Rr\},\forall a \in \R^{D\times D},
\end{equation}
the function $\theta$ can be rewritten as $\theta(x) = \xi^*(\nabla \xi(x))$ (see Lemma~\ref{l.theta} below). 
\begin{definition}[Parisi measures]\label{d.parisi_measure}
We say that a path $p \in \mcl Q\cap L^\infty_{\leq 1}$ is a \emph{Parisi measure} at $(t,q)$ when it is an optimizer in the right-hand side of \eqref{e.parisi}. 
\end{definition}
Usually in the literature, the term Parisi measure refers to the law of $p(U)$ where $U$ is a uniform random variable in $[0,1)$ and $p$ is an optimizer in the right-hand side of \eqref{e.parisi}. Here since we work in the space of paths rather than in the space of probability measures, we choose to use the term Parisi measure to refer to $p$ directly. 

Studying the set of Parisi measures is an important question in itself since they are the optimizers in the fundamental Parisi formula. Their study is further motivated by the fact that they describe the limit in law of the overlap matrix under the expected Gibbs measure, that is the limit in law of $\sigma\tau^*/N$ as $N \to +\infty$ where $\sigma$ and $\tau$ are two independent random variables drawn from the expected Gibbs measure. 

A first rigorous proof of the uniqueness of Parisi measures at $(t,0)$ for scalar spins (i.e. $D = 1$) was put forward in \cite{talagrand2006parisimeasures} but relied on genericity assumptions on $\xi$. In \cite{talagrand2006parisimeasures}, the function $\xi$ is written $\xi(x) = \sum_{p} \beta_p^2 x^{2p}$. Assuming that for every $p$, $\beta_p \neq 0$, the author shows the uniqueness of Parisi measures by differentiating the limit free energy and the Parisi formula with respect to each $\beta_p$. In \cite{auffinger2015parisi}, the uniqueness of Parisi measures at $(t,0)$ is proven in the case of scalar spins (i.e. $D =1$) without any further assumptions on $\xi$. In Appendix~\ref{s.auffingerchen} we give some key ideas of the proof of the uniqueness of Parisi measures at $(t,0)$ given in \cite{auffinger2015parisi} in the context of scalar spins. We also show that thanks to a convexity property from optimal transport, with minimal adaptation, this proof also yields the uniqueness of Parisi measures at $(t,q)$ for every $q \in \mcl Q$ when $D = 1$. In addition, we explain why those arguments do not carry over easily to models with vector spins.

\subsection{Main result}

Let $\text{Id}$ be the $D \times D$ identity matrix. Recall that for $a, b\in S^D$ satisfying $a-b \in S^D_+$, we write $a\geq b$. 
We can view $\nabla\xi$ as an $\R^{D\times D}$-valued function. Moreover, we assume that $\nabla\xi(S^D) \subset S^D$.
Because \eqref{e.H_N(sigma)=} holds, $\xi$ enjoys some extra properties. For example as a consequence of \cite[Propositions~6.4~\&~6.6]{mourrat2020free}, we can assume without loss of generality that $\xi$ satisfies the following monotonicity:
\begin{gather}
    a,\, b \in S^D_+,\quad a\geq b \qquad \Longrightarrow \qquad \xi(a)\geq\xi(b). \label{e.xi_monotonicity}
    \\
    a,\, b \in S^D_+,\quad a\geq b \qquad \Longrightarrow \qquad \nabla\xi(a)\geq\nabla\xi(b). \notag  \end{gather}
Throughout, we will also assume that $\xi$ is superlinear on $S^D_+$, which means
\begin{align}\label{e.superlinear}
    \forall M>0,\quad \exists R>0:\qquad \inf_{x \in S^D_+,\ |x|\geq R}\xi(x)/|x|\geq M.
\end{align}
Also, recall that we assume that $\xi$ is convex on $S^D_+$ and admits an absolutely convergent power series expansion. These conditions along with the existence of $H_N$ are satisfied by a wide range of interactions. We refer to~\cite[Section~1.5]{chenmourrat2023cavity} for the detail.

For every $a \in S^D$, let $\lambda_{\max}(a)$ and $\lambda_{\min}(a)$ be the largest and smallest eigenvalues of $a$, respectively. For $a\in S^D_{++}$, we define 
\begin{e}\label{e.Ellipt(a)=}
    \ellipt(a) = \frac{\lambda_{\max}(a)}{\lambda_{\min}(a)}.
\end{e}
We say that $q \in \mcl Q_\upa$ when $q \in \mcl Q_2$, $q(0) = 0$ and there exists a constant $c > 0$ such that for every $u < v$,
\begin{e}\label{e.Q_uparrow=}
    q(v) -q(u) \geq c(v-u) \text{Id} \quad\text{and}\quad \ellipt(q(v)-q(u)) \leq \frac{1}{c}.
\end{e}
Throughout, we set
\begin{align}\label{e.Q_inftyuparrow=}
    \mcl Q_{\infty,\uparrow} = \mcl Q_\infty \cap \mcl Q_\uparrow.
\end{align}
In this paper, we study the uniqueness of Parisi measures at $(t,q)$ for $q \in \mcl Q_2$. We will show the following theorem. 
\begin{theorem}[Uniqueness of Parisi measures] \label{t.uniqueness}
    Assume that $\xi$ is strictly convex on $S^D_+$ and is superlinear on $S^D_+$. For every $(t,q) \in (0,+\infty) \times \mcl Q_{\infty,\uparrow}$, there is a unique Parisi measure at $(t,q)$ and it is given by $p = \nabla_q f(t,q)$. 
\end{theorem}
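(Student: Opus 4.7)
The plan is to show that any Parisi measure $p^{*}$ at $(t, q)$ satisfies $p^{*} = \nabla_q f(t, q)$. Since the right-hand side does not depend on the choice of optimizer, uniqueness is then immediate. The two main ingredients are the Gateaux differentiability of $f(t, \cdot)$ at $q$ and the envelope theorem applied to the Parisi formula \eqref{e.parisi}, combined with a first-order condition at the optimizer.

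\emph{Step 1 (Gateaux differentiability).} I would establish that for every $h \in L^{2}([0,1); S^{D})$, the limit $\lim_{\eps\to 0} \eps^{-1}(f(t, q+\eps h) - f(t, q))$ exists and defines a continuous linear functional of $h$. The strict-monotonicity condition $q(v)-q(u) \geq c(v-u)\mathrm{Id}$ built into $\mcl Q_{\infty, \upa}$ places $q$ in a regular region of the free energy landscape; I expect this to follow from a convexity property of $f(t, \cdot)$ together with a uniqueness-of-sub/supergradient argument, or alternatively by differentiating the pre-limit $\bar F_N$ in a suitable enrichment and passing to the $N \to \infty$ limit.

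\emph{Step 2 (Envelope identity).} Fix any Parisi measure $p^{*}$. Using $p^{*}$ as a test path in the variational formula for $f(t, q \pm \eps h)$ yields
\begin{e*}
    f(t, q \pm \eps h) - f(t, q) \geq \psi(q \pm \eps h + t \nabla\xi \circ p^{*}) - \psi(q + t \nabla\xi \circ p^{*}).
\end{e*}
Dividing by $\eps > 0$, letting $\eps \to 0^{+}$, and invoking Gateaux differentiability of $f(t, \cdot)$ at $q$ and of $\psi$ at $q + t\nabla\xi\circ p^{*}$ gives the envelope identity
\begin{e*}
    \nabla_q f(t, q) = \nabla \psi(q + t \nabla\xi \circ p^{*}).
\end{e*}

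\emph{Step 3 (First-order condition).} Using the identity $\theta = \xi^{*} \circ \nabla\xi$, the Parisi functional, in the variable $r = \nabla\xi \circ p$, reads $r \mapsto \psi(q + t r) - t \int_{0}^{1} \xi^{*}(r(u))\,\d u$. The Euler--Lagrange equation at the optimizer is $\nabla\psi(q + t r^{*})(u) = \nabla\xi^{*}(r^{*}(u))$ for a.e.\ $u$. Since $\xi$ is strictly convex and superlinear on $S^{D}_{+}$, the gradient $\nabla\xi^{*}$ is the pointwise inverse of $\nabla\xi$, so $\nabla\psi(q + t\nabla\xi\circ p^{*})(u) = p^{*}(u)$. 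Combined with the envelope identity of Step 2, this yields $p^{*} = \nabla_q f(t, q)$, completing the argument.

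The main obstacle I anticipate lies in Step 3: the admissible set $\mcl Q \cap L^{\infty}_{\leq 1}$ is constrained by monotonicity, positive semidefiniteness ($p(u) \in S^{D}_{+}$), càdlàg regularity, and an $L^{\infty}$ ball condition, so one has to justify that enough two-sided perturbations of $p^{*}$ remain admissible to extract the Euler--Lagrange equation in a path-space setting. The hypothesis $q \in \mcl Q_{\infty, \upa}$ is precisely what enables this, since it should force $p^{*}(u) \in S^{D}_{++}$ for positive $u$ and keep $p^{*}$ away from the $L^{\infty}$ boundary. The differentiability claims of Steps 1 and 2, while technical, should follow from standard convex-analytic and soft limiting arguments applied to $\bar F_N$ and its large-$N$ limit.
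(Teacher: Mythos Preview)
Your Steps~1 and~2 are in the right spirit and match the paper: Gateaux differentiability of $f(t,\cdot)$ at $q\in\mcl Q_{\infty,\uparrow}$ is quoted from \cite{chenmourrat2023cavity} (Theorem~\ref{t.gateaux} here), and an envelope inequality is obtained by perturbing $q$ (restricted to Lipschitz directions $\kappa$ with $\kappa(0)=0$, using Lemma~\ref{l.openness} to stay inside $\mcl Q_{\infty,\uparrow}$). The divergence is in Step~3, and it is a genuine gap.

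The paper does \emph{not} try to prove the first-order condition $\nabla\psi(q+t\nabla\xi\circ p^*)=p^*$ by perturbing the optimizer. Instead it first rewrites the Parisi formula as the Hopf--Lax representation (Theorem~\ref{t.Hopf--Lax})
\begin{e*}
    f(t,q)=\sup_{q'\in\mcl Q_\infty}\Ll\{\psi(q')-t\int\xi^*\Ll(\frac{q'-q}{t}\Rr)\Rr\},
\end{e*}
in which the $q$-dependence sits in the $\xi^*$ term rather than in $\psi$. Running your Step~2 argument on \emph{this} formula gives directly $\nabla_q f(t,q)=\nabla\xi^*((q'-q)/t)=\nabla\xi^*(\nabla\xi(p^*))=p^*$, the last equality by Lemma~\ref{l. diff xi^*}. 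No Euler--Lagrange condition at $p^*$ is ever needed; the envelope theorem alone closes the argument.

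Your proposed Step~3, by contrast, requires two-sided perturbations of $p^*$ inside $\mcl Q\cap L^\infty_{\leq 1}$, and the monotonicity constraint is the real obstruction: if $p^*$ is constant on some interval, every two-sided admissible perturbation must be constant there too, so the variational equation cannot be localized. Your hope that $q\in\mcl Q_{\infty,\uparrow}$ forces $p^*$ to be strictly increasing is correct as a statement, but in the paper this fact (the $\infty$-RSB remark) is deduced \emph{from} Theorem~\ref{t.uniqueness}, so invoking it here would be circular. Even granting strict monotonicity, one does not get uniform strict monotonicity, so $p^*+\eps\eta$ can still fail to be nondecreasing for generic $\eta$. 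The Hopf--Lax reformulation is precisely the device that sidesteps all of this.
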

Here, $\nabla_q f(t,q)$ denotes the Gateaux derivative of $f(t,\cdot)$ at $q$, see Definition~\ref{d.gateaux diff} below. Note that Theorem~\ref{t.uniqueness} does not include the case $q = 0$, which is arguably the most interesting. The proof of Theorem~\ref{t.uniqueness} is in the style of \cite{talagrand2006parisimeasures} and relies on the Gateaux differentiability of the limit free energy on $(0,+\infty) \times \mcl Q_{\infty,\uparrow}$ as proven in \cite[Proposition~8.1]{chenmourrat2023cavity}.
\begin{definition}[Gateaux differentiability] \label{d.gateaux diff}
Let $(E,\,|\,\cdot\,|_E)$ be a Banach space and denote by $\langle \,\cdot\,,\, \cdot\, \rangle_E$ the canonical pairing between $E$ and its dual $E^*$. Let $G$ be a subset of $E$ and, for every $q \in G$, we define
\begin{e*}
    \mathrm{Adm}(G,q) = \{\kappa \in E \big| \, \exists r > 0, \, \forall t\in [0,r], q+t\kappa \in G \},
\end{e*}
to be the set of admissible directions at $q$. A function $h : G \to \R$ is Gateaux differentiable at $q \in G$ if the following two conditions hold:
\begin{enumerate}
    \item For every $\kappa \in \mathrm{Adm}(G,q)$, the following limit exists 
\begin{e*}
        h'(q,\kappa) = \lim_{t \downarrow 0} \frac{h(q+t\kappa)-h(q)}{t}.
    \end{e*}

    \item There is a unique $y^* \in E^*$ such that for every $\kappa \in \mathrm{Adm}(G,q)$, 
\begin{e*}
        h'(q,\kappa) = \langle y^*,\kappa \rangle_E.
    \end{e*}
\end{enumerate}
In such a case, we call $y^*$ the Gateaux derivative of $h$ at $q$ and we denote it $\nabla h(q)$.
\end{definition}
We close the introduction with a few remarks related to Theorem~\ref{t.uniqueness}. First, as a consequence of the cavity computations performed in \cite{chenmourrat2023cavity} the unique Parisi measure encodes the limit law of the overlap.
\begin{remark}[Limit law of the overlap] If $\xi$ satisfies the hypotheses of Theorem~\ref{t.uniqueness}, and we further assume that $\xi$ is strictly convex on $\R^{D \times D}$ or that $\xi$ is strongly convex on $S^D_+$, then it follows from \cite[Proposition~8.8]{chenmourrat2023cavity} and \cite[Theorem~1.4]{chenmourrat2023cavity} respectively, that for every $(t,q) \in (0,+\infty) \times \mcl Q_{\infty,\uparrow}$, the law of the overlap matrix $\sigma\tau^*/N$ under the expected Gibbs measure converges as $N \to +\infty$ to the law of $p(U)$ where $p$ is the unique Parisi measure at $(t,q)$ and $U$ is a uniform random variable in $[0,1)$.
\end{remark}
Furthermore, it follows from \cite[Proposition~5.16]{simulRSB} that when $q \in \mcl Q_{\infty,\upa}$ the unique Parisi measure is strictly increasing.
\begin{remark}[$\infty$-RSB at $q \in \mcl Q_{\infty,\uparrow}$]Assume that $\xi$ is as in Theorem~\ref{t.uniqueness} and that the support of $P_1$ sans $\R^D$. At every $(t,q) \in (0,+\infty)\times\mcl Q_{\infty,\uparrow}$, the unique Parisi measure $p$ is strictly increasing on $[0,1)$ (i.e.\ $p(v)\neq p(u)$ for every distinct $v,u\in [0,1)$. 
\end{remark}
Finally, since the limit free energy is Lipschitz on $\R_+ \times \mcl Q_1$ \cite[Proposition~5.1]{chenmourrat2023cavity} and $\mcl Q_{\infty,\upa}$ is dense in $\mcl Q_1$, we have the following perturbative result.
\begin{remark}[Uniqueness up to small perturbation]
    There is a constant $C>0$ such that, for every $t>0$ and $q\in \mcl Q_1$,
    \begin{align*}
        \Ll|f(t,0)-f(t,q)\Rr|\leq C|q|_{L^1}.
    \end{align*}
    Therefore, for every $\eps>0$, we can find $q\in \mcl Q_{\infty,\uparrow}$ such that $f(t,q)$ differs from $f(t,0)$ by $\eps$ uniformly in $t$. Hence, we can roughly state that, in a convex vector spin glass model, up to an arbitrarily small perturbation the Parisi measure is unique.
\end{remark}
To conclude, we point out the following technical considerations about multi-species models.
\begin{remark}[Adaption of results to multi-species models]
    We refer to~\cite{chen2024free} for a general setup of a multi-species spin glass model.
    If the species population ratios are not rational, then a multi-species model is not equivalent to a vector spin glass model as encoded here. 
    So, the results here are not directly applicable. However, they can be adapted straightforwardly using the same arguments and replacing results cited from~\cite{chenmourrat2023cavity} by corresponding results in~\cite{chen2024free}.
\end{remark}

\subsection{Organization of the paper}
In Section~\ref{s.enriched} we give a precise definition of the enriched free energy. In Section~\ref{ss.hopflax} we prove a modified version of the Hopf--Lax formula derived in \cite[Corollary~8.2]{chenmourrat2023cavity}, see \eqref{e.Hopf--Lax} in Theorem~\ref{t.Hopf--Lax}. In Section~\ref{s.uniqueness} we use this modified Hopf-Lax formula to prove Theorem~\ref{t.uniqueness}. In Section~\ref{ss.frechet}, as an application of Theorem~\ref{t.uniqueness}, we upgrade the Gateaux differentiability result \cite[Proposition~8.1]{chenmourrat2023cavity} to a Fréchet differentiability result, see Definition~\ref{d.frechet} and Theorem~\ref{t.frechet}. Finally, in Section~\ref{s.unique crit point}, we show that Theorem~\ref{t.uniqueness} implies uniqueness in the critical point representation of \cite{chenmourrat2023cavity}. In Appendix~\ref{s.auffingerchen} we show the uniqueness of Parisi measures at $(t,q)$ for $q \in \mcl Q$ and $D =1$ using a different argument relying on the strict convexity property of \cite{auffinger2015parisi}. We also explain why this argument does not carry over easily when $D > 1$.

\subsection*{Acknowledgments}

The second author thanks Jean-Christophe Mourrat for introducing the problem of the uniqueness of Parisi measures for the enriched free energy to him and for many helpful comments and suggestions. The authors also thank Wei-Kuo Chen and Dmitry Panchenko for useful feedback on preliminary versions of this work. The first author is funded by the Simons Foundation.

\section{Definition of the enriched free energy} \label{s.enriched}
 
The goal of this section is to give a precise definition of the enriched free energy appearing in Theorem~\ref{t.parisi} and Theorem~\ref{t.uniqueness}. We start by giving a definition of $F_N(t,q)$ for a piecewise constant $q \in \mcl Q$. We recall that, for $A,B \in \R^{D \times D}$, we write $A \leq B$ when $B-A \in S^D_+$; and $A < B$ when $A \leq B$ and $A \neq B$. Let $K \in \N$ and let $q \in \mcl Q$ be a path of the form
\begin{e} \label{e.piecewise}
    q = \sum_{k = 0}^K q_k \1_{[\zeta_k,\zeta_{k+1})}
\end{e}
where
\begin{gather}
    0 = q_{-1} \leq q_0 < q_1 < \dots < q_K, \label{e.qs=}
    \\
    0 = \zeta_0 < \zeta_1 < \dots < \zeta_K < \zeta_{K+1}=1.\label{e.zetas=}
\end{gather}
The construction of $F_N(t,q)$ involves a random probability measure with ultrametric properties called the Poisson--Dirichlet cascade. We briefly introduce this object and refer to \cite[Section~2.3]{pan} for a more detailed explanation. We define
\begin{e*}
    \mcl A = \N^0 \cup \N^1 \cup \dots \cup \N^K
\end{e*}
with the understanding that $\N^0 = \{ \emptyset \}$. We think of $\mcl A$ as a tree rooted at $\emptyset$ such that each vertex of depth $k < K$ has countably many children. For each $k < K$ and $\alpha = (n_1,\dots,n_k) \in \N^k$ the children of $\alpha$ are the vertices of the form 
\begin{e*}
    \alpha n = (n_1,\dots,n_k,n) \in \N^{k+1}.
\end{e*}
The depth of $\alpha = (n_1,\dots,n_k)$ is denoted by $|\alpha| = k$ and for every $l \leq k$, we write 
\begin{e*}
    \alpha_{|l} = (n_1,\dots,n_l)
\end{e*}
to denote the ancestor of $\alpha$ at depth $l$. Given two leaves $\alpha,\beta \in \N^K$, we denote by $\alpha \wedge \beta$ the generation of the most recent common ancestor of $\alpha$ and $\beta$, that is
\begin{e*}
    \alpha \wedge \beta = \sup \{ k \leq K:\: \alpha_{|k} = \beta_{|k} \}.
\end{e*}
We attach an independent Poisson process to each non-leaf vertex $\alpha \in \mcl A$ with intensity measure. 
\begin{e*}
    x^{-1-\zeta_{|\alpha|+1}} \d x.
\end{e*}
We order increasingly the points of those Poisson processes and denote them by $u_{\alpha 1} \geq u_{\alpha 2} \geq \dots$. For every $\alpha \in \N^K$, we set $w_\alpha = \prod_{k  = 1}^K u_{\alpha_{|k}}$ and define
\begin{e*}
   v_\alpha = \frac{w_\alpha}{\sum_{\beta \in \N^K} w_\beta}. 
\end{e*}
\begin{definition}[Poisson--Dirichlet cascade] The Poisson--Dirichlet cascade associated to $(\zeta_k)_{1\leq K+1}$ in \eqref{e.zetas=} is the random probability measure on $\N^K$ (the leaves of the tree $\mcl A$) whose weights are given by $(v_\alpha)_{\alpha \in \N^K}$.
\end{definition}
Let $(v_\alpha)_{\alpha \in \N^K}$ be the Poisson--Dirichlet cascade associated to $(\zeta_k)_{1\leq K+1}$ in \eqref{e.zetas=}, chosen to be independent of $H_N$. Let $(z_\beta)_{\beta \in \mcl A}$ be a family of independent $\R^{D \times N}$-valued Gaussian vectors with independent standard Gaussian entries. We choose $(z_\beta)_{\beta \in \mcl A}$ independent of $(v_\alpha)_{\alpha \in \N^K}$ and $H_N$. For every $\alpha \in \N^K$, we set 
\begin{e*}
    w^q(\alpha) = \sum_{k = 0}^K (q_k - q_{k-1})^{1/2} z_{\alpha_{|k}}
\end{e*}
with $(q_k)_{0\leq k\leq K}$ given in~\eqref{e.qs=}.
The centered Gaussian process $(w^q(\alpha))_{\alpha \in \N^K}$ is $\R^{D \times N}$-valued and has the following covariance structure 
\begin{e*}
    \E \left[ w^q(\alpha)w^q(\alpha')^* \right] = Nq_{\alpha \wedge \alpha'}.
\end{e*}
Henceforth, we write $\R_+=[0,+\infty)$. For $t\in\R_+$ and $q$ given in~\eqref{e.piecewise}, we define the enriched Hamiltonian
\begin{e*} H_N^{t,q}(\sigma,\alpha) = \sqrt{2t} H_N(\sigma) - Nt \xi \left( \frac{\sigma \sigma^*}{N}\right) + \sqrt{2}w^q(\alpha) \cdot \sigma - \sigma \cdot q_K \sigma,
\end{e*}
where $H_N$ is given as in~\eqref{e.H_N(sigma)=}.
And we define
\begin{e} \label{e.enriched free energy}
    F_N(t,q) = -\frac{1}{N} \E \log \int \sum_{\alpha \in \N^K} \exp \left(H_N^{t,q}(\sigma,\alpha) \right) v_\alpha \d P_N(\sigma).
\end{e}
Recall $\mcl Q$ from~\eqref{e.Q=} and $\mcl Q_p=\mcl Q\cap L^p([0,1);S^D)$.
The expression in the previous display is Lipschitz with respect to $(t,q)$. More precisely for every $t_1,t_2 \in \R_+$ every piecewise constant $q_1,q_2 \in \mcl Q$, as proven in \cite[Proposition~3.1]{chenmourrat2023cavity}, we have 
\begin{e}\label{e.F_L^1}
    |F_N(t_1,q_1) - F_N(t_2,q_2)| \leq |q_1-q_2|_{L^1} +|t_1-t_2| \sup_{|a| \leq 1} |\xi(a)|.  
\end{e}
As a consequence, the free energy admits a unique Lipschitz extension to $\R_+ \times \mcl Q_1$. The relevance of the enriched free energy is encapsulated in Theorem~\ref{t.gateaux} below, which is extracted from \cite[Corollary~8.7]{chenmourrat2023cavity}. In other words, this result states that the enriched free energy is the unique solution of some partial differential equation. This approach was explored in \cite{guerra2001sum}, in the replica-symmetric regime, and was later extended to various settings~\cite{abarra,barra2,barra1,barra2014quantum,barramulti,HJbook,mourrat2020nonconvex,mourrat2019parisi,mourrat2020free,mourrat2020extending}.

Recall the notion of Gateaux differentiability in Definition~\ref{d.gateaux diff}, the definition of $\mcl Q_{\infty,\uparrow}$ in~\eqref{e.Q_uparrow=} and~\eqref{e.Q_inftyuparrow=}, and lastly the limit free energy $f$ in~\eqref{e.f=} (which exists when $\xi$ is convex on $S^D_+$).

Given a Gateaux differentiable functions $h : \mcl Q_{\infty,\uparrow} \subset L^2 \to \R$, we have for $q \in \mcl Q_{\infty,\uparrow}$, $\nabla h(q) \in L^2$. In particular, provided that the integral converges, the following quantity is well-defined $\int_0^1 \xi(\nabla h(q)(u)) \d u$, where $u\in[0,1)$ is the variable making $\nabla h(q)$ an element of $L^2=L^2([0,1))$. Hence, we write for short
\begin{align}\label{e.meaning_nonlinearity}
    \int \xi(\nabla h(q)) = \int_0^1 \xi(\nabla h(q)(u)) \d u.
\end{align}
\begin{theorem}[The free energy solves a PDE \cite{chenmourrat2023cavity}]\label{t.gateaux}
Assume that $\xi$ is convex on $S^D_+$. The limit free energy $f$ is Gateaux differentiable at every $(t,q) \in (0,+\infty) \times \mcl Q_{\infty,\uparrow}$ and satisfies
\begin{e} \label{e.gateaux hj}
    \partial_t f - \int \xi(\nabla_q f) = 0 
\end{e}
everywhere on $(0,+\infty) \times \mcl Q_{\infty,\uparrow}$. Moreover, we have
\begin{align}\label{e.nablaf(t,q)>0}
    \nabla_q f(t,q) \in \mcl Q\cap L^\infty_{\leq 1}\subset \mcl Q_2,\qquad\forall (t,q)\in(0,+\infty)\times \mcl Q_{\infty,\uparrow}.
\end{align}
\end{theorem}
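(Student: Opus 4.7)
The plan is to derive the theorem from three separate inputs already available in the excerpt: the Parisi formula~\eqref{e.parisi}, the $L^1$-Lipschitz bound~\eqref{e.F_L^1}, and a Gaussian integration-by-parts calculation on the prelimit $F_N$. I would argue Gateaux differentiability and the PDE separately, then combine.

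For the Gateaux differentiability on $\mcl Q_{\infty,\uparrow}$, I would first use that the Parisi formula expresses $f(t,\cdot)$ as a pointwise supremum of functionals $q\mapsto \psi(q+t\nabla\xi\circ p)-t\int_0^1\theta(p)$ that are convex in $q$ (since $\psi$ is convex on $\mcl Q_2$), so $f(t,\cdot)$ is convex on $\mcl Q_2$. Combined with the $1$-Lipschitz bound in $L^1$ inherited from~\eqref{e.F_L^1} by taking $N\to\infty$, convex analysis gives a nonempty subdifferential contained in the unit ball of $L^\infty$, which handles the $L^\infty_{\leq 1}$ half of~\eqref{e.nablaf(t,q)>0}. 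The monotonicity half ($\nabla_q f(t,q)\in\mcl Q$) follows by testing against admissible perturbations valued in $S^D_+$ and using the monotonicity~\eqref{e.xi_monotonicity} of $\xi$ propagated through the Parisi formula. To upgrade subdifferentiability to Gateaux differentiability at $q\in\mcl Q_{\infty,\uparrow}$, observe that~\eqref{e.Q_uparrow=} makes $\mathrm{Adm}(\mcl Q_{\infty,\uparrow},q)$ contain all sufficiently small $L^\infty$ perturbations of $q$ that preserve the elliptic-monotonicity bounds; these directions span a dense subspace of $L^2$, so two distinct subgradients would be separated by some admissible $\kappa$, forcing uniqueness of the subgradient and hence Gateaux differentiability.

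For the PDE~\eqref{e.gateaux hj}, I would differentiate the prelimit $F_N(t,q)$ in $t$ using Gaussian integration by parts on $H_N$ to express $\partial_t F_N(t,q)$ as the expected Gibbs average of $\xi$ applied to the normalised two-replica overlap $\sigma\tau^*/N$, and similarly differentiate $F_N$ along piecewise-constant variations of $q$ using Gaussian integration by parts on the cascade fields $z_\beta$ to identify the corresponding Gateaux derivative of $F_N$ with the expected overlap at the matching ultrametric layer. Convexity of $\xi$ on $S^D_+$ yields concentration of the overlap matrix under the Gibbs measure, which lets one replace $\E \xi(\sigma\tau^*/N)$ by $\xi(\E[\sigma\tau^*/N])$ asymptotically. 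Passing $N\to\infty$, the two computations match, and combining with the Gateaux differentiability of $f$ already established gives $\partial_t f - \int\xi(\nabla_q f) = 0$.

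The main obstacle would be making the density argument in the second paragraph precise: the cone of admissible directions at $q\in\mcl Q_{\infty,\uparrow}$ is genuinely restricted (perturbations must stay in $\mcl Q_\infty$ and preserve the $c$-dependent inequalities in~\eqref{e.Q_uparrow=}), and one must verify that it still separates subgradients inside $L^\infty_{\leq 1}$. This is where the strict monotonicity encoded in~\eqref{e.Q_uparrow=} is essential, and it is also the reason the present theorem---unlike its analogue at $q=0$---is available without any additional strict-convexity hypothesis on $\xi$.
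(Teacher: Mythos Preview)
This theorem is not proved in the present paper; it is quoted from \cite[Corollary~8.7]{chenmourrat2023cavity} (see the sentence introducing Theorem~\ref{t.gateaux}). The argument there runs through the Parisi variational formula~\eqref{e.parisi} rather than through prelimit cavity identities: one shows that any maximizer $p$ in~\eqref{e.parisi} satisfies the fixed-point relation $p=\nabla\psi(q+t\nabla\xi\circ p)$, and then the envelope theorem applied to~\eqref{e.parisi} gives simultaneously $\nabla_q f(t,q)=\nabla\psi(q+t\nabla\xi\circ p)=p$ and $\partial_t f(t,q)=\langle p,\nabla\xi\circ p\rangle_{L^2}-\int\theta(p)=\int\xi(p)=\int\xi(\nabla_q f)$. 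Your outline for the Gateaux-differentiability half---convexity of $f(t,\cdot)$ inherited from~\eqref{e.parisi}, the $L^1$-Lipschitz bound, and the richness of $\mathrm{Adm}(\mcl Q_{\infty,\uparrow},q)$ forcing uniqueness of the subgradient---is in the right spirit and close to how \cite{chenmourrat2023cavity} proceeds.

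Your route to the PDE, however, has a genuine gap at the sentence ``convexity of $\xi$ on $S^D_+$ yields concentration of the overlap matrix under the Gibbs measure, which lets one replace $\E\,\xi(\sigma\tau^*/N)$ by $\xi(\E[\sigma\tau^*/N])$ asymptotically''. This is false as written: in the replica-symmetry-broken regime the overlap $\sigma\tau^*/N$ does \emph{not} concentrate around a single matrix---its limiting law under the Gibbs measure is precisely the non-trivial Parisi measure---and convexity of $\xi$ supplies only the Jensen inequality $\E\,\xi(R)\geq\xi(\E R)$, which is the wrong direction for what you need. The relevant statement is a \emph{conditional} concentration of the overlap given the cascade depth $\alpha\wedge\alpha'$; that is a substantial synchronisation result obtained in \cite{chenmourrat2023cavity} via perturbation and the Ghirlanda--Guerra identities, not a by-product of convexity of $\xi$. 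Even granting it, you still owe an interchange of the $N\to\infty$ limit with the $t$- and $q$-derivatives, which you do not address and which is exactly why the cited proof works at the level of the limit functional $f$ through~\eqref{e.parisi} rather than at the level of $F_N$.
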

The quantity $\int \xi(\nabla_q f)$ is understood as in~\eqref{e.meaning_nonlinearity} and $L^\infty_{\leq 1}$ is defined in~\eqref{e.L^infty_<1}. 

Recall $\psi$ from~\eqref{e.psi=} and thus we have
\begin{align}\label{e.f(0,)=psi}
    f(0,\cdot) =\psi,\qquad \text{on $\mcl Q_2\supset \mcl Q_{\infty,\uparrow}$}.
\end{align}
According to \cite[Theorem~1.1]{chen2022hamilton}, the Cauchy problem \eqref{e.gateaux hj} on $(0,+\infty) \times \mcl Q_{\infty,\uparrow}$ with initial condition $f(0,\cdot)=\psi$ admits a unique solution (in the viscosity sense). Therefore, Theorem~\ref{t.gateaux} can be seen as a characterization of the limit free energy. A variational formula for $f$ analogous to the Parisi formula follows from the Hopf--Lax representation \cite[Theorem~1.1~(2)]{chen2022hamilton}. It has been conjectured in \cite{mourrat2019parisi} that Theorem~\ref{t.gateaux} is the right way of characterizing the limit free energy for models with nonconvex $\xi$. In particular, the limit of the usual free energy $\bar F_N(t,0)$ (without enrichment) as in~\eqref{e.free energy} is conjectured to be the value at $(t,0)$ of the unique solution of \eqref{e.gateaux hj} regardless of the convexity of $\xi$ \cite[Conjecture~2.6]{mourrat2019parisi}. Partial results toward a proof of this conjecture have been obtained in \cite{mourrat2020nonconvex, mourrat2020free,chenmourrat2023cavity,issa2024permutation}. Because of this, we believe that the enriched free energy is an interesting object for understanding both convex and nonconvex spin glasses and this is why we choose to study it in the remainder of this paper.
\section{Modified Hopf--Lax representation of the limit free energy} \label{ss.hopflax}
Henceforth, we always assume that $\xi$ is convex on $S^D_+$ and we let $f = \lim_{N \to +\infty} F_N$ be the pointwise limit on $\R_+ \times \mcl Q_2$ of the enriched free energy (as in~\eqref{e.f=}). Recall that $f$ is jointly Gateaux differentiable on $(0,+\infty) \times \mcl Q_{\infty,\uparrow}$. We also simplify our notation by writing $\nabla f = \nabla_q f$ for the derivative in the second coordinate. 

Our proof of Theorem~\ref{t.uniqueness} uses the differentiability of the limit free energy to apply the envelope theorem. At the heuristic level, this theorem encapsulates the following phenomenon. Consider $h : \R \to \R$ and $k : \R \times \R \to \R$ such that 
\begin{e} \label{e.envelope}
    h(x) = \sup_{x' \in \R} k(x,x').
\end{e}
Let $x \in \R$ and let $x' \in \R$ be a maximizer in \eqref{e.envelope}. Assume that $h$ is differentiable at $x$ and $k$ is differentiable at $(x,x')$. Then formally we have,
\begin{e*}
    \frac{\d }{\d x} h(x) = \partial_x k(x,x') + \partial_{x'} k(x,x') \frac{\d }{\d x} x',
\end{e*}
Since $x'$ maximizes $k(x,\cdot)$ we have $\partial_{x'} k(x,x') = 0$ and we are left with 
\begin{e*}
    \frac{\d }{\d x} h(x) = \partial_x k(x,x').
\end{e*}
When $\partial_x k(x,\cdot)$ is injective, this computation shows that there is a unique optimizer $x'$. A rigorous proof of the envelope theorem for functions defined on $\R^d$ can be found in \cite[Theorem~2.21]{HJbook}. 

To apply the envelope theorem to $f(t,q)$, we will not use the Parisi formula \eqref{e.parisi} but a  Hopf--Lax type formula (see \eqref{e.Hopf--Lax} below). A closely related variational formula has already been derived in \cite[Corollary~8.2]{chenmourrat2023cavity}, but it is different in nature. In \cite[Corollary~8.2]{chenmourrat2023cavity} the dependence in $q$ of the formula is through $\psi$ while in \eqref{e.Hopf--Lax} it is trough $\xi^*$. 

\begin{theorem}[Hopf--Lax representation] \label{t.Hopf--Lax}
Assuming that $\xi$ is strictly convex on $S^D_+$ and is superlinear on $S^D_+$, we have that, for every $(t,q) \in (0,+\infty) \times \mcl Q_{\infty,\uparrow}$, 
\begin{e} \label{e.Hopf--Lax}
        f(t,q) = \sup_{q' \in \mcl Q_\infty} \left\{ \psi(q') - t\int \xi^* \left( \frac{q'-q}{t}\right) \right\}.
    \end{e}
\end{theorem}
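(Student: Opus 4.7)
The plan is to derive \eqref{e.Hopf--Lax} from the generalized Parisi formula \eqref{e.parisi} via the change of variable $q' := q + t \nabla\xi \circ p$, exploiting the identity $\theta(x) = \xi^*(\nabla\xi(x))$ noted just after Theorem~\ref{t.parisi}.

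Starting from \eqref{e.parisi}, I would perform this substitution. For $p \in \mcl Q \cap L^\infty_{\leq 1}$, the monotonicity of $\nabla\xi$ on $S^D_+$ from \eqref{e.xi_monotonicity} together with the continuity of $\nabla\xi$ on the compact set $\{b \in S^D_+ : |b| \leq 1\}$ imply that $q' \in \mcl Q_\infty$, and $\theta(p) = \xi^*((q'-q)/t)$. Hence
\begin{equation*}
f(t,q) \;=\; \sup_{q' \in \mcl B} \Ll\{\psi(q') - t \int \xi^*\Ll(\frac{q'-q}{t}\Rr)\Rr\} \;\leq\; \sup_{q' \in \mcl Q_\infty} \Ll\{\psi(q') - t \int \xi^*\Ll(\frac{q'-q}{t}\Rr)\Rr\},
\end{equation*}
where $\mcl B := \{q + t \nabla\xi \circ p : p \in \mcl Q \cap L^\infty_{\leq 1}\} \subset \mcl Q_\infty$. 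This gives the easy direction of \eqref{e.Hopf--Lax}.

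For the reverse inequality, the main tool is Young's inequality $\xi(b) + \xi^*(a) \geq a \cdot b$ on $S^D_+ \times S^D$, with equality iff $a = \nabla\xi(b)$. Given $q' \in \mcl Q_\infty$, in the favorable case where $(q'(u) - q(u))/t$ lies pointwise in $\nabla\xi(\{b \in S^D_+ : |b| \leq 1\})$, the strict convexity of $\xi$ ensures that $p(u) := (\nabla\xi|_{S^D_+})^{-1}((q'(u) - q(u))/t)$ is well defined, nondecreasing, and bounded by $1$. Young's inequality saturates along this $p$, so the candidate value $\psi(q') - t \int \xi^*((q'-q)/t)$ coincides with the Parisi value at $p$ and is thus bounded above by $f(t,q)$.

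The main obstacle is the general case: $q' \in \mcl Q_\infty$ for which $(q' - q)/t$ fails to be nondecreasing—which can occur when $q \in \mcl Q_{\infty,\uparrow}$ is strictly increasing but $q'$ grows inhomogeneously—or strays outside the range of $\nabla\xi|_{\{b \in S^D_+ : |b| \leq 1\}}$. I would treat this either by (a) a direct rearrangement argument, replacing $q'$ by some $\hat q' \in \mcl B$ without decreasing $\psi(q') - t\int \xi^*((q'-q)/t)$, using the $L^1$-Lipschitz continuity of $\psi$ recorded in \eqref{e.F_L^1} together with the convexity and superlinearity of $\xi^*$; or (b) more cleanly, by invoking the uniqueness (in the viscosity sense) of solutions of the Hamilton--Jacobi equation \eqref{e.gateaux hj} with initial data $\psi$, guaranteed by \cite[Theorem~1.1]{chen2022hamilton} and recalled in the paragraph after Theorem~\ref{t.gateaux}: the right-hand side of \eqref{e.Hopf--Lax} is the standard Hopf--Lax formula for the convex Hamiltonian $H(p) = \int \xi(p)$, hence a viscosity solution of \eqref{e.gateaux hj} on $(0,+\infty) \times \mcl Q_{\infty,\uparrow}$, so it must coincide with $f$.
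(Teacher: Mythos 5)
Your easy direction matches the paper's: the substitution $q' = q + t\,\nabla\xi\circ p$, combined with $\theta = \xi^*\circ\nabla\xi$ (Lemma~\ref{l.theta}), turns the Parisi supremum into a supremum over a subset $\mcl B \subset \mcl Q_\infty$, so $f(t,q)$ is bounded above by the right-hand side of \eqref{e.Hopf--Lax}.

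For the reverse inequality, however, neither of your two suggestions is what the paper does, and both leave concrete gaps. Suggestion (a) --- rearranging $q'$ to force $(q'-q)/t$ into the range of $\nabla\xi|_{\{b\in S^D_+:|b|\leq 1\}}$ while keeping it increasing --- is exactly the route that is available for $D=1$ (the paper notes this in Appendix~\ref{s.auffingerchen}, citing \cite[Proposition~A.3]{chen2022hamilton}), but it does not carry over to $D>1$: paths in $\mcl Q$ must have totally ordered range in $S^D_+$, and there is no canonical monotone rearrangement in that setting, which is precisely why the paper takes another route. Suggestion (b) --- invoking uniqueness of viscosity solutions --- asks you to prove first that the right-hand side of \eqref{e.Hopf--Lax} is itself a viscosity solution of \eqref{e.gateaux hj} on $(0,+\infty)\times\mcl Q_{\infty,\uparrow}$ with initial datum $\psi$; this is not given for free, and the Hopf--Lax formula already available from \cite[Theorem~1.1~(2)]{chen2022hamilton} (and \cite[Corollary~8.2]{chenmourrat2023cavity}) has a different form, as the paper explicitly points out. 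So (b) as stated has a hole in the step ``hence a viscosity solution.''

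The paper's actual argument for the reverse inequality is more direct and only needs to verify one inequality: fix $q' \in \mcl Q_{\infty,\uparrow}$ and consider the segment $\gamma_s = q' + \tfrac{s}{t}(q-q')$, $s\in[0,t]$, which remains in $\mcl Q_{\infty,\uparrow}$. Using the Gateaux differentiability of $f$ and the equation \eqref{e.gateaux hj}, one computes
\begin{equation*}
\frac{\d}{\d s} f(s,\gamma_s) = \int \xi(\nabla f(s,\gamma_s)) + \langle \dot\gamma_s, \nabla f(s,\gamma_s)\rangle_{L^2} \;\geq\; -\int \xi^*(-\dot\gamma_s) = -\int \xi^*\Ll(\frac{q'-q}{t}\Rr),
\end{equation*}
by Young's inequality at each $u$. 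Integrating from $s=0$ (where $f(0,\gamma_0)=\psi(q')$) to $s=t$ gives $f(t,q)\geq \psi(q') - t\int\xi^*((q'-q)/t)$ for $q'\in\mcl Q_{\infty,\uparrow}$, and then a density argument (using that $\xi^*$ is locally Lipschitz via Lemma~\ref{l.loclip} and that $\psi$ is $L^1$-Lipschitz via \eqref{e.F_L^1}) extends the inequality to all $q'\in\mcl Q_\infty$. This is the piece of your proposal that needs to be replaced.
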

Here for $\phi : S^D \to \R$ and $\kappa \in L^2([0,1);S^D)$ we write for short 
\begin{e*}
    \int \phi(\kappa) = \int_0^1 \phi(\kappa(u))\d u.
\end{e*}

Before starting the proof of Theorem~\ref{t.Hopf--Lax}, we will adapt two classical results from convex duality to our context in Lemmas~\ref{l.theta} and~\ref{l.loclip} below. Recall the definition of $\theta$ in~\eqref{e.theta=}.
\begin{lemma} \label{l.theta}
    Assume that $\xi$ is convex on $S^D_+$, then we have
\begin{e*}
        \theta(x) = \xi^*(\nabla \xi(x)),\qquad\forall x \in S^D_+.
    \end{e*}
\end{lemma}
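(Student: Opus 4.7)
The plan is to recognize this as the standard Fenchel identity $\xi^*(\nabla\xi(x)) = x\cdot\nabla\xi(x) - \xi(x)$, adapted to the fact that $\xi$ is globally smooth on $\R^{D\times D}$ but convexity (and the definition of $\xi^*$) is only posited over the cone $S^D_+$. I would prove two matching inequalities.

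For the lower bound, I would simply test $b = x$ in the definition~\eqref{e.def.xi.star}: since $x \in S^D_+$,
\begin{equation*}
    \xi^*(\nabla\xi(x)) \geq \nabla\xi(x)\cdot x - \xi(x) = \theta(x).
\end{equation*}

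For the upper bound, the key input is the subgradient inequality. Fix $x \in S^D_+$ and an arbitrary $b \in S^D_+$. Since $S^D_+$ is convex, the segment $[x,b]$ lies inside $S^D_+$, so the function $g(s) = \xi(x + s(b-x))$ is well-defined and smooth on $[0,1]$, and convex by the convexity of $\xi$ on $S^D_+$. Therefore $g(1) \geq g(0) + g'(0)$, which reads
\begin{equation*}
    \xi(b) \geq \xi(x) + \nabla\xi(x)\cdot(b-x),
\end{equation*}
where the Frobenius-type pairing $\nabla\xi(x)\cdot(b-x)$ is well-defined because $\nabla\xi(x) \in \R^{D\times D}$ (and $\nabla\xi(x) \in S^D$ by the hypothesis $\nabla\xi(S^D) \subset S^D$ recalled before~\eqref{e.xi_monotonicity}, though this is not needed here). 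Rearranging gives
\begin{equation*}
    \nabla\xi(x)\cdot b - \xi(b) \leq \nabla\xi(x)\cdot x - \xi(x) = \theta(x),
\end{equation*}
and taking the supremum over $b \in S^D_+$ yields $\xi^*(\nabla\xi(x)) \leq \theta(x)$. Combining the two bounds proves the claim.

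There is no real obstacle here; the only point that requires a moment of care is that $\xi$ is only assumed convex on the cone $S^D_+$ rather than on all of $\R^{D\times D}$, so one must check that the tangent-plane inequality applies when both anchor point and test point lie in $S^D_+$, which is the role of the one-dimensional reduction along the segment $[x,b] \subset S^D_+$.
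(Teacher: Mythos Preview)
Your proof is correct and follows essentially the same approach as the paper: both prove the two matching inequalities, the lower bound by testing $b=x$ in the definition of $\xi^*$, and the upper bound by deriving the subgradient inequality $\xi(b)\geq \xi(x)+\nabla\xi(x)\cdot(b-x)$ from convexity along the segment $[x,b]\subset S^D_+$. The paper phrases this last step via the difference quotient $\frac{\xi(\lambda b+(1-\lambda)x)-\xi(x)}{\lambda}\leq \xi(b)-\xi(x)$ and sends $\lambda\to 0$, which is the same one-dimensional reduction you carry out with $g(s)=\xi(x+s(b-x))$.
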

\begin{proof}
For every $x\in S^D_+$, it follows from the definition of $\xi^*$ in~\eqref{e.def.xi.star} that 
\begin{align*}
    \theta(x) &=  \nabla \xi(x) \cdot x - \xi(x) 
    \\
    &\leq \sup_{x' \in S^D_+} \left\{ \nabla \xi(x) \cdot x' - \xi(x') \right\} 
    \\
    &= \xi^*(\nabla\xi(x)).
\end{align*}
Conversely, for any $x'\in S^D_+$, the convexity of $\xi$ on $S^D_+$ implies that for every $\lambda \in (0,1]$, 
\begin{equation*}  
\frac{\xi(\lambda x'+ (1-\lambda)x) - \xi(x)}{\lambda} \leq \xi(x') - \xi(x).
\end{equation*}
Taking $\lambda \to 0$, we get 
\begin{equation*}  
    \nabla\xi(x) \cdot (x'-x)  \leq \xi(x')-\xi(x).
\end{equation*}
Rearranging, we obtain
\begin{equation*}  
     \nabla\xi(x) \cdot x' - \xi(x') \leq \nabla \xi(x) \cdot x -\xi(x).
\end{equation*}
Taking the supremum over $x' \in S^D_+$, we get $\xi^*(\nabla \xi(x)) \leq \theta(x)$.
\end{proof}
\begin{lemma} \label{l.loclip}
Assume that $\xi$ is convex on $S^D_+$ and is superlinear on $S^D_+$, then $\xi^*$ is locally Lipschitz on $\R^{D \times D}$.
\end{lemma}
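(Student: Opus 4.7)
The plan is to deduce local Lipschitzness of $\xi^*$ from the basic convex-analytic principle that any convex function that is everywhere finite on the finite-dimensional space $\R^{D\times D}$ is automatically locally Lipschitz. Since $\xi^*$ is a pointwise supremum of affine functions $a \mapsto a \cdot b - \xi(b)$ (indexed by $b \in S^D_+$), it is already convex without any hypothesis on $\xi$. Once finiteness $\xi^*(a) < \infty$ for every $a \in \R^{D \times D}$ is verified, the standard fact recalled above (see e.g.\ Rockafellar, \emph{Convex Analysis}, Thm.~10.4) upgrades this to local Lipschitz continuity on all of $\R^{D \times D}$, which is exactly the conclusion we want.

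The only real work is therefore to prove finiteness, and I would do this by combining the superlinearity hypothesis~\eqref{e.superlinear} with the continuity of $\xi$ on compact sets. Fix $a \in \R^{D \times D}$ and apply~\eqref{e.superlinear} with $M := |a| + 1$ to obtain $R>0$ such that $\xi(b) \geq M |b|$ for every $b \in S^D_+$ with $|b| \geq R$. For such $b$, Cauchy--Schwarz gives
\begin{equation*}
    a \cdot b - \xi(b) \;\leq\; |a|\,|b| - M|b| \;=\; -|b| \;\leq\; 0,
\end{equation*}
so the sup defining $\xi^*(a)$ is unchanged if one restricts it to the compact set $K_R := \{b \in S^D_+ : |b| \leq R\}$. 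On $K_R$ the smooth function $\xi$ is bounded, and hence
\begin{equation*}
    \xi^*(a) \;=\; \sup_{b \in K_R}\bigl\{a\cdot b - \xi(b)\bigr\} \;\leq\; |a|\,R + \sup_{b \in K_R}|\xi(b)| \;<\; +\infty .
\end{equation*}

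There is no serious obstacle: the argument is a direct application of Fenchel duality, and the superlinearity assumption is precisely what is needed to make the Legendre--Fenchel conjugate proper on all of $\R^{D\times D}$ rather than just on a strict subset. If one prefers a self-contained argument (without citing a textbook for the convex-implies-locally-Lipschitz step), it would suffice to observe that on any ball $B_{R_0} \subset \R^{D\times D}$, the function $\xi^*$ is finite, hence bounded on the compact set $\overline{B_{R_0+1}}$ (convex functions attain their max on the vertices of any simplex containing the point, so local boundedness above gives global boundedness above on compacts, and convexity plus any lower bound from a supporting affine function supplies boundedness below); a standard slope estimate for convex functions then converts boundedness on $B_{R_0+1}$ into a Lipschitz bound on $B_{R_0}$.
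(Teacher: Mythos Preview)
Your proposal is correct and follows essentially the same approach as the paper: use superlinearity to show that the supremum defining $\xi^*(a)$ can be restricted to a compact subset of $S^D_+$, deduce that $\xi^*$ is finite everywhere on $\R^{D\times D}$, and then invoke the standard fact (Rockafellar, Theorem~10.4) that a finite convex function on a finite-dimensional space is locally Lipschitz. Your write-up is slightly more explicit in a couple of places (naming the choice $M=|a|+1$ and noting that convexity of $\xi^*$ comes from it being a supremum of affine functions), but the argument is the same.
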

\begin{proof}
For every $y \in \R^{D \times D}$, using the superlinearity of $\xi$ as in~\eqref{e.superlinear}, we have for $x \in S^D_+$ with $|x|$ large enough    
\begin{e*}
    x \cdot y - \xi(x) \leq |x|(|y| - \xi(x)/|x|) \leq 0.
\end{e*}
Thus, for $R > 0$ large enough, 
\begin{e*}
    \xi^*(y) = \sup_{x \in S^D_+,\ |x| \leq R} \left\{ x \cdot y - \xi(x) \right\} < +\infty
\end{e*}
Hence $\xi^*$ is finite on $\R^{D \times D}$. Since $\xi^*$ is also convex, we conclude from a classical result~\cite[Theorem~10.4]{rockafellar1970convex} that $\xi^*$ is locally Lipschitz.
\end{proof}
\begin{proof}[Proof of Theorem~\ref{t.Hopf--Lax}]
It is clear from Theorem~\ref{t.parisi} and Lemma~\ref{l.theta} that
\begin{e} \label{e.f(t,q)<sup}
    f(t,q) \leq \sup_{q' \in \mcl Q_\infty} \left\{ \psi(q') - t\int \xi^* \left( \frac{q'-q}{t}\right) \right\}.
\end{e}
Let us prove the converse bound. We fix any $q' \in \mcl Q_{\infty,\uparrow}$ and set 
\begin{e*}
    {\gamma}_s = q' + \frac{s}{t}(q-q'),\quad\forall s\in[0,t].
\end{e*}
From the definition of $\mcl Q_{\infty,\uparrow}$ in~\eqref{e.Q_inftyuparrow=}, we have ${\gamma}_s \in \mcl Q_{\infty,\uparrow}$ for every $s\in[0,t]$. According to Theorem~\ref{t.gateaux}, $f$ is jointly Gateaux differentiable on $(0,+\infty) \times \mcl Q_{\infty,\uparrow}$. Hence, for every $s \in (0,t)$,
\begin{align*}
    \frac{\d}{\d s} f(s,{\gamma}_s) &= \partial_t f(s,{\gamma}_s) + \langle \dot{\gamma}_s , \nabla_q f(s,{\gamma}_s) \rangle_{L^2}\\
    &\stackrel{\eqref{e.gateaux hj}}{=} \int_0^1 \xi(\nabla f(s,{\gamma}_s))(u) \d u + \langle \dot{\gamma}_s , \nabla f(s,{\gamma}_s) \rangle_{L^2}.
\end{align*}
By definition of $\xi^*$ in~\eqref{e.def.xi.star} and $\nabla f(s,{\gamma}_s) \in \mcl Q_2$ due to~\eqref{e.nablaf(t,q)>0}, we have for every $u \in [0,1)$, 
\begin{e*}
    \xi^*(-\dot{\gamma}_s(u)) \geq -\dot{\gamma}_s(u) \cdot \nabla f(s,{\gamma}_s)(u) - \xi(\nabla f(s,{\gamma}_s)(u))
\end{e*}
which along with the previous display implies
\begin{align*}
    \frac{\d}{\d s} f(s,{\gamma}_s) \geq  -\int_0^1 \xi^*(-\dot{\gamma}_s(u)) \d u  = -\int_0^1 \xi^* \left( \frac{q'(u)-q(u)}{t}\right) \d u.
\end{align*}
Integrating this with respect to $s$ and using $f(0,\gamma_0) = \psi(q')$ due to~\eqref{e.f(0,)=psi}, we obtain,
\begin{e*}
    f(t,q) \geq \psi(q') - t\int_0^1 \xi^* \left( \frac{q'(u)-q(u)}{t}\right) \d u.
\end{e*}
Since $q'$ is arbitrary, we get 
\begin{e*}
    f(t,q) \geq \sup_{q' \in \mcl Q_{\infty,\uparrow} } \left\{ \psi(q') - t\int_0^1 \xi^* \left( \frac{q'(u)-q(u)}{t}\right) \d u \right\}.
\end{e*}
To conclude, we observe that the set 
\begin{e*}
    \left\{ u \mapsto uc \text{Id} + \sum_{k = 1}^K q_k \1_{[\frac{k-1}{K},\frac{k}{K})}\  \Big| \ c > 0;\; q_1=0; \; q_2,\dots q_k \in S^D_{++} \right\}
\end{e*}
is dense in $\mcl Q_\infty$ with respect to $L^1$-convergence and is contained in $\mcl Q_{\infty,\uparrow}$. Since $\xi^*$ is locally Lipschitz as a consequence of Lemma~\ref{l.loclip} and $\psi$ is continuous in $L^1$ due to~\eqref{e.psi=} and~\eqref{e.F_L^1}, by a density argument, we obtain 
\begin{e*}
    f(t,q) \geq \sup_{q' \in \mcl Q_\infty} \left\{ \psi(q') - t\int_0^1 \xi^* \left( \frac{q'(u)-q(u)}{t}\right) \d u \right\}.
\end{e*}
This together with~\eqref{e.f(t,q)<sup} completes the proof.
\end{proof}
\section{Uniqueness of Parisi measures for vector spins} \label{s.uniqueness}

As discussed in detail at the beginning of Section~\ref{ss.hopflax}, to prove Theorem~\ref{t.uniqueness} we use the envelope theorem. To do so, we will again need to adapt a classical result from convex duality for the differentiability of $\xi^*$ (see Lemma~\ref{l. diff xi^*}) taking into account that the supremum in the definition of $\xi^*$ in~\eqref{e.def.xi.star} is taken over $S^D_+$ instead of the whole space. We will also need to show that even though $\mcl Q_2$ has an empty interior, $\mcl Q_{\infty,\uparrow}$ satisfies some sort of openness condition. More precisely, we are going to show that given $q \in \mcl Q_{\infty,\uparrow}$ and a Lipschitz function $\kappa : [0,1) \to S^D$, one has $q +\varepsilon \kappa \in \mcl Q_{\infty,\uparrow}$ for every $\epsilon$ small enough (see Lemma~\ref{l.openness}).  

\begin{lemma} \label{l. diff xi^*}
    Assume that $\xi$ is strictly convex on $S^D_+$ and is superlinear on $S^D_+$. Then, $\xi^*$ is continuously differentiable on $S^D$. In addition, we have 
\begin{e*} \nabla \xi^*(\nabla \xi(x)) = x,\quad\forall x \in S^D_+.
    \end{e*}
\end{lemma}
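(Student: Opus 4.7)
The plan is to verify the three standard ingredients of Fenchel conjugacy, adapted to the fact that the supremum defining $\xi^*$ is taken over the closed cone $S^D_+$.

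First, I would fix $a \in S^D$ and argue that the supremum in the definition of $\xi^*(a)$ is attained uniquely. The superlinearity argument already used in the proof of Lemma~\ref{l.loclip} confines the supremum to a compact subset of $S^D_+$, and continuity of $b \mapsto a \cdot b - \xi(b)$ yields existence, while strict convexity of $\xi$ on $S^D_+$ forces uniqueness. Denote the maximizer by $b(a)$. A standard compactness argument (any limit point of $b(a_n)$ as $a_n \to a$ is again a maximizer at $a$, hence equals $b(a)$) then shows that $a \mapsto b(a)$ is continuous on $S^D$.

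Next, I would show that $\xi^*$ is Gâteaux differentiable on $S^D$ with gradient $b(a)$. The chain of inequalities $\xi^*(a') \geq a' \cdot b(a) - \xi(b(a)) = \xi^*(a) + (a' - a) \cdot b(a)$ exhibits $b(a)$ as a subgradient of $\xi^*$ at $a$. For uniqueness of the subgradient, I would use Fenchel--Young together with biconjugacy of the $+\infty$-extension of $\xi$ to $S^D$ to identify $\partial \xi^*(a)$ with the set of maximizers in the definition of $\xi^*(a)$, which is $\{b(a)\}$. Since $\xi^*$ is a finite convex function on the finite-dimensional space $S^D$, pointwise Gâteaux differentiability together with continuity of $a \mapsto b(a)$ upgrades automatically to continuous differentiability by standard convex analysis.

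Finally, for the identity $\nabla \xi^*(\nabla \xi(x)) = x$ at $x \in S^D_+$, I would set $a = \nabla \xi(x)$ and invoke convexity of $\xi$ on $S^D_+$: for every $b \in S^D_+$, the inequality $\xi(b) \geq \xi(x) + \nabla \xi(x) \cdot (b - x)$ rearranges to $a \cdot b - \xi(b) \leq a \cdot x - \xi(x)$, so $x$ is a maximizer, and uniqueness gives $x = b(a)$. I do not anticipate a genuine obstacle; the one delicate point is the restriction of the supremum to the closed cone $S^D_+$, which is handled by working with the $+\infty$-extension of $\xi$ off $S^D_+$ so that biconjugacy delivers subdifferential uniqueness on all of $S^D$, including boundary points of the cone.
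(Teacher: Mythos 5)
Your proof is correct and follows the same overall strategy as the paper (show that $\partial\xi^*(a)$ is a singleton, upgrade to $C^1$ by convexity, then invert $\nabla\xi$ via the argmax characterization), but it differs in two technical details worth noting.

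For the subdifferential uniqueness, the paper first shows by hand that $\partial\xi^*(y)\subset S^D_+$ (using $\xi^*(y-a)\le\xi^*(y)$ for $a\in S^D_+$), and then invokes the Fenchel--Moreau identity on the cone $S^D_+$ from \cite[Theorem~2.2]{chen2022fenchelmoreau}, whose hypotheses include the monotonicity of $\xi$ in \eqref{e.xi_monotonicity}. You instead apply the classical Fenchel--Moreau biconjugation theorem to the $+\infty$-extension $\tilde\xi$ of $\xi$ to $S^D$; this extension is proper, convex (since $\xi$ is convex on the convex set $S^D_+$) and lower semicontinuous (since $\xi$ is continuous and $S^D_+$ is closed), so biconjugacy holds, and Fenchel--Young then simultaneously forces $\partial\xi^*(a)\subset S^D_+$ and identifies $\partial\xi^*(a)$ with the argmax set, which strict convexity makes a singleton. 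This is a clean alternative that avoids the cone-conjugation machinery and, incidentally, does not need the monotonicity assumption that the paper's cited theorem requires. For the $C^1$ upgrade, the paper simply cites \cite[Corollary~25.5.1]{rockafellar1970convex} (a finite convex function that is everywhere differentiable is automatically continuously differentiable), so the separate step of proving continuity of $a\mapsto b(a)$ is not needed; your version is correct but does a little extra work. The final identity $\nabla\xi^*(\nabla\xi(x))=x$ is obtained in essentially the same way in both proofs.
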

\begin{proof}
    Since $\xi^*$ is convex, it is enough (e.g.\ ~\cite[Corollary~25.5.1]{rockafellar1970convex}) to show that $\xi^*$ is differentiable on $S^D$ to prove that $\xi^*$ is continuously differentiable. We fix any $y \in S^D$ and want to show that $\xi^*$ is differentiable at $y$. We consider the subdifferential 
\begin{e}\label{e.partialxi^*(y)=}
        \partial \xi^*(y) = \Ll\{ x \in S^D \ \big| \ \forall y' \in S^D, \, \xi^*(y')  - \xi^*(y) \geq x \cdot(y'-y) \Rr\}.
    \end{e}
According to Lemma~\ref{l.loclip}, $\xi^*$ is finite at $y$. Then, it follows from \cite[Proposition~5.3]{ET}, if $\partial \xi^*(y) = \{x\}$ for some $x \in S^D$, then $\xi^*$ is differentiable at $y$ with $\nabla \xi^*(y) =x$. We are going to show that $\partial \xi^*(y)$ is contained in the set of maximizers of the strictly concave functional, $x \mapsto x \cdot y - \xi(x)$.

    \noindent Step 1. We show that $\partial \xi^*(y) \subset S^D_+$.

    \noindent Recall (e.g.\ \cite[Theorem 7.5.4]{horn2012matrix} that given $x \in S^D$, we have $x \in S^D_+$ if and only if
\begin{e*}
        x \cdot a \geq 0,\quad\forall a \in S^D_+.
    \end{e*}
Let $x \in \partial \xi^*(y)$, for every $a \in S^D_+$ we let $y' = y - a$. We have 
\begin{align*}
        \xi^*(y') &= \sup_{x' \in S^D_+} \left\{ x' \cdot y' - \xi(x') \right\} \\
                 &\leq\sup_{x' \in S^D_+} \left\{ x' \cdot y - \xi(x') \right\} \\
                 &= \xi^*(y).
    \end{align*}
Thus, 
\begin{e*}
        x \cdot a = x \cdot  (y-y') \stackrel{\eqref{e.partialxi^*(y)=}}{\geq} \xi^*(y) - \xi^*(y') \geq 0,
    \end{e*}
which proves that $x \in S^D_+$.
    
    \noindent Step 2. We show that $\xi^*(y) \leq x \cdot y - \xi(x)$ for every $x \in \partial \xi^*(y)$ and that this implies that $\xi^*$ is differentiable at $y$.
    
    \noindent Fix any $x\in \partial \xi^*(y)$. For every $y' \in S^D_+$, we have 
\begin{e*}
        x \cdot y - \xi^*(y) \stackrel{\eqref{e.partialxi^*(y)=}}{\geq} x \cdot y' - \xi^*(y').
    \end{e*}
According to Step 1, we have $x \in S^D_+$, so by the biconjugation theorem on $S^D_+$ \cite[Theorem~2.2]{chen2022fenchelmoreau} (for which we need the convexity of $\xi$ and the monotonicity of $\xi$ in~\eqref{e.xi_monotonicity}) we have \begin{e*}
        \xi(x) = \xi^{**}(x)=\sup_{y' \in S^D_+} \left\{ x \cdot y' - \xi^*(y') \right\}.
    \end{e*}
Therefore, from the previous two displays, we obtain 
\begin{e*}
         x \cdot y - \xi^*(y) \geq \xi(x),
    \end{e*}
which is the desired inequality. As a consequence, we have $x=x_0$ where $x_0$ is the unique maximizer of the strictly concave function $x' \mapsto x' \cdot y - \xi(x)$ over $S^D_+$. Since $x\in\partial\xi^*(y)$ is arbitrary, we get $\partial \xi^*(y) = \{ x_0\}$. Hence, as explained in the beginning, we conclude that $\xi^*$ is differentiable at $y$ with $\nabla \xi^*(y) = x_0$.

    \noindent Step 3. We show that for every $x \in S^D_+$, $\nabla \xi^*(\nabla \xi(x)) = x$.

    \noindent From Lemma~\ref{l.theta}, we know that
\begin{e*}
        \xi^*( \nabla \xi(x)) = x \cdot \nabla \xi(x) - \xi(x).
    \end{e*}
Thus, $x$ is the maximizer of the strictly concave functional $x' \mapsto x' \cdot \nabla \xi(x) - \xi(x')$ over $S^D_+$. Using the last part of Step~2 with $y$ substituted with $\nabla\xi(x)$, we get $\nabla \xi^*(\nabla \xi(x)) = x$, which completes the proof.
\end{proof}

Recall the definition of $\ellipt(a)$ in~\eqref{e.Ellipt(a)=} and those of $\lambda_{\max}(a)$ and $\lambda_{\min}(a)$ above~\eqref{e.Ellipt(a)=}.
It follows from Weyl's inequalities that $\lambda_{\max}$ and $\lambda_{\min}$ are respectively sub-additive and super-additive. Therefore, given two symmetric matrices $a,b \in S^D$ such that $a+b \in S^D_{++}$, $\lambda_{\max}(a) + \lambda_{\max}(b) \geq 0$ and $\lambda_{\min}(a) + \lambda_{\min}(b) > 0$, we have
\begin{e} \label{e.ellipt bounds}
    \ellipt(a+b) \leq \frac{\lambda_{\max}(a) + \lambda_{\max}(b)}{\lambda_{\min}(a) + \lambda_{\min}(b)}.
\end{e}

\begin{lemma} \label{l.openness}
     Let $\kappa : [0,1) \to S^D$ be a Lipschitz function such that $\kappa(0) = 0$ and $q \in \mcl Q_{\infty,\uparrow}$. We have $q +\varepsilon\kappa \in \mcl Q_{\infty,\uparrow}$ for $\varepsilon > 0$ small enough. 
\end{lemma}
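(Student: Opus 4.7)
The plan is to verify each defining condition of $\mcl Q_{\infty,\uparrow}$ for the perturbed path $q+\eps\kappa$ by decomposing its increments and controlling the Lipschitz perturbation against the ellipticity constant of $q$. Write $L$ for the Lipschitz constant of $\kappa$ and $c>0$ for the constant in~\eqref{e.Q_uparrow=} associated with $q\in\mcl Q_{\infty,\uparrow}$. Fix any $u<v$ in $[0,1)$ and set
\begin{equation*}
    A = q(v)-q(u), \qquad B = \eps\bigl(\kappa(v)-\kappa(u)\bigr),
\end{equation*}
so that the increment of $q+\eps\kappa$ on $[u,v]$ is $A+B$. By assumption $A\geq c(v-u)\mathrm{Id}$ and $\ellipt(A)\leq 1/c$, while $B\in S^D$ satisfies $|B|\leq \eps L(v-u)$, hence $-\eps L(v-u)\mathrm{Id}\leq B\leq \eps L(v-u)\mathrm{Id}$.

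The first step is to establish the lower bound. Adding the two displayed estimates yields $A+B\geq (c-\eps L)(v-u)\mathrm{Id}$, which already gives $A+B\in S^D_+$ (so that $q+\eps\kappa$ is nondecreasing) and the first inequality in~\eqref{e.Q_uparrow=} with constant $c/2$ as soon as $\eps\leq c/(2L)$.

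The second step is to control the ellipticity ratio by means of~\eqref{e.ellipt bounds} (that is, Weyl's inequalities). Since $\lambda_{\min}(A)\geq c(v-u)$, the Lipschitz bound on $\kappa$ gives $\eps L(v-u)\leq (\eps L/c)\lambda_{\min}(A)$; combining this with $\lambda_{\max}(A)\leq (1/c)\lambda_{\min}(A)$ from $\ellipt(A)\leq 1/c$, a direct substitution into~\eqref{e.ellipt bounds} yields
\begin{equation*}
    \ellipt(A+B)\leq \frac{\lambda_{\max}(A)+\eps L(v-u)}{\lambda_{\min}(A)-\eps L(v-u)}\leq \frac{1+\eps L}{c-\eps L},
\end{equation*}
which stays bounded uniformly in $u<v$ for all $\eps$ sufficiently small. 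Taking $c'=\min\bigl(c/2,\,(c-\eps L)/(1+\eps L)\bigr)$ with $\eps$ small enough shows that $q+\eps\kappa$ satisfies both conditions in~\eqref{e.Q_uparrow=} with constant $c'$.

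It remains to check the membership in $\mcl Q_\infty$. The path $q+\eps\kappa$ inherits the càdlàg property from $q$ since $\kappa$ is continuous, vanishes at $0$ because $q(0)=\kappa(0)=0$, and is bounded on $[0,1)$ since $q\in L^\infty$ and $\kappa$ is Lipschitz on $[0,1)$. Combined with the previous step and the lower bound $A+B\in S^D_+$, this shows that $q+\eps\kappa\in\mcl Q\cap L^\infty = \mcl Q_\infty$ and thus $q+\eps\kappa\in\mcl Q_{\infty,\uparrow}$. The only mildly delicate point is extracting a uniform ellipticity bound from the two local estimates on $A$, which is why the argument routes through~\eqref{e.ellipt bounds} and uses $\lambda_{\min}(A)\geq c(v-u)$ to absorb the perturbation $\eps L(v-u)$ into a fixed fraction of $\lambda_{\min}(A)$; everything else is a bookkeeping of constants.
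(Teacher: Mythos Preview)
Your proof is correct and follows essentially the same approach as the paper's: both verify the lower bound and the ellipticity condition in~\eqref{e.Q_uparrow=} by combining the Lipschitz control on $\kappa$ with Weyl's inequalities via~\eqref{e.ellipt bounds}, and then check the remaining membership conditions. Your final ellipticity bound $\frac{1+\eps L}{c-\eps L}$ is in fact exactly the paper's bound $c^{-1}+\frac{(c^{-1}+1)L\eps}{c-L\eps}$ after simplification; you just reach it more directly by writing $\lambda_{\max}(A)\leq c^{-1}\lambda_{\min}(A)$ and $\eps L(v-u)\leq (\eps L/c)\lambda_{\min}(A)$ and substituting, whereas the paper expands and regroups.
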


\begin{proof}
Recall that we write $a\geq b$ if $a,b\in S^D$ satisfies $a-b\in S^D_+$.
Since $\kappa$ is Lipschitz, there is a constant $L>0$ such that
\begin{align}\label{e.kappa(v)-kappa(u)<}
    \kappa(v) - \kappa(u) \leq L|u-v|\Id,\quad\forall v,\,u \in [0,1).
\end{align}
This implies that, for every $v, u \in [0,1)$, we have
\begin{align}\label{e.lambda(kappa(v)-kappa(u))}
    \lambda_{\max}(\kappa(v)-\kappa(u)),\ \lambda_{\min}(\kappa(v)-\kappa(u)) \in [-L(v-u), L(v-u)].
\end{align}
The definition of $\mcl Q_{\infty,\uparrow}$ in~\eqref{e.Q_inftyuparrow=} and~\eqref{e.Q_uparrow=} gives a constant $c>0$ such that
\begin{e}\label{e.q(v)-q(u)}
    q(v) -q(u) \geq c(v-u) \text{Id} \quad\text{and}\quad \ellipt(q(v)-q(u)) \leq c^{-1}.
\end{e}
Fixing $u <v$, we have 
\begin{align*}
    (q(v)+\varepsilon \kappa(v))-(q(u)+\varepsilon \kappa(u)) &\stackrel{\eqref{e.kappa(v)-kappa(u)<}\eqref{e.q(v)-q(u)}}{\geq} c(v-u)\Id - L\varepsilon(v-u)\Id \\
    &= (c -L\varepsilon)(v-u)\Id.
\end{align*}
We also have
\begin{align*}
    &\ellipt((q(v)+\varepsilon \kappa(v))-(q(u)+\varepsilon \kappa(u))) 
    \\
    &\stackrel{\eqref{e.ellipt bounds}\eqref{e.lambda(kappa(v)-kappa(u))}}{\leq} \frac{\lambda_{\max}(q(v) - q(u)) + L\varepsilon|u-v|}{\lambda_{\min}(q(v) - q(u)) - L\varepsilon |u-v|} \\
    &\stackrel{\eqref{e.q(v)-q(u)}}{\leq} \frac{c^{-1}(\lambda_{\min}(q(v) - q(u))-L\varepsilon|u-v|) + (c^{-1}+1)L\varepsilon|u-v|}{\lambda_{\min}(q(v) - q(u)) - L\varepsilon |u-v|} \\
    &\leq c^{-1} + \frac{(c^{-1}+1)L\varepsilon|u-v|}{\lambda_{\min}(q(v) - q(u)) - L\varepsilon |u-v|} \\
    &\stackrel{\eqref{e.q(v)-q(u)}}{\leq} c^{-1} + \frac{(c^{-1}+1)L\varepsilon|u-v|}{c|u-v| - L\varepsilon |u-v|} \\
    &\leq c^{-1} +\frac{(c^{-1}+1)L\varepsilon}{c - L\varepsilon}.
\end{align*}
Furthermore, since $q \in L^\infty$ and $\kappa$ is Lipschitz, it is clear that $q +\varepsilon \kappa$ is bounded. Since $\kappa(0) = 0$ it is also clear that $(q +\varepsilon \kappa)(0)= 0$. Now, comparing these and the above two displays with the definitions in~\eqref{e.Q_uparrow=} and~\eqref{e.Q_inftyuparrow=}, we conclude that $q +\varepsilon \kappa \in \mcl Q_{\infty,\upa}$ for sufficiently small $\eps$.
\end{proof}

\begin{proof}[Proof of Theorem~\ref{t.uniqueness}]
For brevity, we write
\begin{e*}
    \msc H_t(\rho,\rho') = \psi(\rho') - t \int \xi^* \left( \frac{\rho'-\rho}{t}\right),\qquad\forall \rho,\rho' \in \mcl Q_\infty.
\end{e*}
Using this notation, we can rewrite the Hopf--Lax representation~\eqref{e.Hopf--Lax} of the Parisi formula in Theorem~\ref{t.Hopf--Lax} as
\begin{align}\label{e.parisi_rewrite}
    f(t,q) = \sup_{q'\in\mcl Q_\infty}\msc H_t\Ll(q,q'\Rr).
\end{align}
We fix $(t,q) \in (0,+\infty) \times \mcl Q_{\infty,\uparrow}$. 
Let $p \in \mcl Q_\infty$ be a Parisi measure (see Definition~\ref{d.parisi_measure}) at $(t,q)$. Set $q' = q + t \nabla \xi(p)$. 
Using Lemma~\ref{l.theta} to rewrite $\theta$ in~\eqref{e.parisi},
we have $f(t,q) = \msc H_t(q,q')$. So, $q'$ is a maximizer in the right-hand side of \eqref{e.parisi_rewrite}. Let $\kappa : [0,1) \to S^D$ be a Lipschitz function with $\kappa(0) = 0$, we have
\begin{e*}
    \frac{\msc H_t(q+\varepsilon \kappa,q') -\msc H_t(q,q')}{\varepsilon} = -\frac{1}{\varepsilon} \int t\xi^* \left( \frac{q'-q - \varepsilon \kappa }{t}\right)- t\xi^* \left( \frac{q'-q}{t}\right).
\end{e*}
The integral on the right-hand side is $\int = \int_0^1\d u$ over the parameter of paths.
According to Lemma~\ref{l. diff xi^*}, the function $\xi^*$ is continuously differentiable on $S^D$. In particular, $\xi^*$ is locally Lipschitz and we can apply the dominated convergence theorem in the previous display to discover that 
\begin{e*}
    \lim_{\varepsilon \downarrow 0} \frac{\msc H_t(q+\varepsilon \kappa,q') -\msc H_t(q,q')}{\varepsilon} =  \left\langle \nabla \xi^* \left( \frac{q'-q}{t}\right) , \kappa \right\rangle_{L^2}.
\end{e*}
By Lemma~\ref{l.openness}, we have $q +\varepsilon \kappa \in \mcl Q_{\infty,\uparrow}$ for every $\varepsilon > 0$ small enough. Due to~\eqref{e.parisi_rewrite}, we have $f(t,q+\varepsilon \kappa) \geq \msc H_t(q+\varepsilon \kappa,q')$ and thus
\begin{e*}
    \frac{f(t,q+\varepsilon \kappa) -f(t,q)}{\varepsilon} \geq \frac{\msc H_t(q+\varepsilon \kappa,q') -\msc H_t(q,q')}{\varepsilon}.
\end{e*}
By Theorem~\ref{t.gateaux}, $f(t,\cdot)$ is Gateaux differentiable (see Definition~\ref{d.gateaux diff}) at $q$. Letting $\varepsilon \to 0$ in the previous display, we obtain
\begin{e*}
   \langle  \nabla f(t,q) , \kappa \rangle_{L^2}  \geq \Ll\langle \nabla \xi^* \left( \frac{q'-q}{t}\right), \kappa \Rr\rangle_{L^2}.
\end{e*}
Since $\kappa$ is an arbitrary Lipschitz function vanishing at $u = 0$ and those functions are dense in $L^2$, this yields $\nabla f(t,q) = \nabla \xi^* \left( \frac{q'-q}{t}\right)$. In addition, according to Lemma~\ref{l. diff xi^*}, we have 
\begin{e*}
    \nabla \xi^* \left( \frac{q'-q}{t}\right) = \nabla \xi^* (\nabla \xi(p)) = p.
\end{e*}
So in conclusion, $p = \nabla f(t,q)$.
\end{proof}

\section{Fréchet differentiability of the limit free energy} \label{ss.frechet}

Finally, as an application of Theorem~\ref{t.uniqueness} we will upgrade the Gateaux differentiability result of \cite[Proposition~8.1]{chenmourrat2023cavity} to a Fréchet differentiability result.

\begin{definition}[Fréchet differentiability]\label{d.frechet}
Let $(E,\,|\,\cdot\,|_E)$ be a Banach space and denote by $\langle \,\cdot\,,\, \cdot\, \rangle_E$ the canonical pairing between $E$ and $E^*$. Let $G$ be a subset of $E$ and $q \in G$, a function $h : G \to \R$ is Fréchet differentiable at $q \in G$ if there exists a unique $y^* \in E^*$ such that the following holds:
\begin{e*}
        \lim_{\substack{q' \to q \\ q' \in G}} \frac{h(q') - h(q) - \langle y^*, q'-q \rangle_E}{|q'-q|_E} = 0.
    \end{e*}
In such a case, we call $y^*$ the Fréchet derivative of $h$ at $q$ and we denote it $\nabla h(q)$.
\end{definition}
When a function $h : G \to \R$ is Fréchet differentiable at every, $q \in G$ we say that $h$ is Fréchet differentiable everywhere on $G$. 
\begin{theorem}[Fréchet differentiability of the free energy] \label{t.frechet}
Assume that $\xi$ is convex on $S^D_+$ and is superlinear on $S^D_+$. The limit free energy $f$ as in~\eqref{e.f=} is Fréchet differentiable everywhere on $(0,+\infty) \times \mcl Q_{\infty,\uparrow}$. 
\end{theorem}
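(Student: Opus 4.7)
The plan is to bootstrap the Gateaux differentiability of Theorem~\ref{t.gateaux} into Fréchet differentiability by combining (i) the joint convexity of $f$ on $\mathbb{R}_+\times\mcl Q_2$ with (ii) the uniqueness of Parisi measures from Theorem~\ref{t.uniqueness}. Joint convexity is transparent from Theorem~\ref{t.parisi}: for each fixed $p\in\mcl Q\cap L^\infty_{\leq 1}$, the map $(t,q)\mapsto\psi(q+t\nabla\xi(p))-t\int\theta(p)$ is jointly convex on $\mathbb{R}_+\times\mcl Q_2$ (composition of the convex function $\psi$ with an affine map, plus a linear-in-$t$ term), and $f$ is the pointwise supremum of such functions; convexity of $\psi$ itself on $\mcl Q_2$ is standard and inherited from Gaussian-interpolation arguments applied to $F_N(0,\cdot)$.

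Fix $(t,q)\in(0,+\infty)\times\mcl Q_{\infty,\uparrow}$. Joint convexity combined with Gateaux differentiability at $(t,q)$ and at any $(t',q')\in(0,+\infty)\times\mcl Q_{\infty,\uparrow}$ yields the two-sided subgradient sandwich
\begin{align*}
    0 &\leq f(t',q')-f(t,q)-\partial_t f(t,q)(t'-t)-\langle\nabla_q f(t,q),\,q'-q\rangle_{L^2} \\
      &\leq \bigl(\partial_t f(t',q')-\partial_t f(t,q)\bigr)(t'-t)+\langle\nabla_q f(t',q')-\nabla_q f(t,q),\,q'-q\rangle_{L^2}.
\end{align*}
By Cauchy--Schwarz, Fréchet differentiability at $(t,q)$ reduces to showing that the gradient map $(t',q')\mapsto(\partial_t f(t',q'),\nabla_q f(t',q'))$ is continuous at $(t,q)$ in $\mathbb{R}\times L^2$. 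Continuity of $\partial_t f$ will follow from that of $\nabla_q f$ via the identity $\partial_t f=\int\xi(\nabla_q f)$ of~\eqref{e.gateaux hj}, using that $\xi$ is Lipschitz on bounded sets and the uniform $L^\infty$-bound~\eqref{e.nablaf(t,q)>0}.

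To establish $L^2$-continuity of $\nabla_q f$, suppose $(t_n,q_n)\to(t,q)$ with entries in $(0,+\infty)\times\mcl Q_{\infty,\uparrow}$, and set $p_n=\nabla_q f(t_n,q_n)$. By~\eqref{e.nablaf(t,q)>0}, each $p_n$ is a nondecreasing $S^D_+$-valued path with $\|p_n\|_{L^\infty}\leq 1$, so for every $v\in\mathbb{R}^D$ the scalar path $u\mapsto v^\top p_n(u)v$ is bounded and nondecreasing. Helly's selection theorem, applied along a countable dense set of vectors $v$ followed by diagonal extraction and polarization, delivers a subsequence $p_{n_k}$ converging pointwise almost everywhere to some $p\in\mcl Q\cap L^\infty_{\leq 1}$; dominated convergence upgrades this to strong $L^r$-convergence for every $r<\infty$. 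The continuity of $\nabla\xi$ and $\theta$, together with the $L^1$-Lipschitz continuity of $\psi$ from~\eqref{e.F_L^1} (specialized at $t=0$), then allow passage to the limit in the Parisi functional $\psi(q_{n_k}+t_{n_k}\nabla\xi(p_{n_k}))-t_{n_k}\int\theta(p_{n_k})$; the limiting value is $f(t,q)$ by continuity of $f$, which identifies $p$ as a Parisi measure at $(t,q)$. Theorem~\ref{t.uniqueness} then forces $p=\nabla_q f(t,q)$, and since every subsequence admits a sub-subsequence with the same limit, the full sequence $p_n$ converges to $\nabla_q f(t,q)$ in $L^2$.

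The principal obstacle is this last upgrade from mere boundedness to strong $L^2$-convergence: in an infinite-dimensional setting, Gateaux derivatives of convex continuous functions need not be norm-continuous, and without norm convergence the sandwich inequality above yields only a mode of differentiability strictly weaker than Fréchet. What rescues us is the monotonicity built into $\mcl Q$, which makes a matrix-valued Helly argument available, coupled with Theorem~\ref{t.uniqueness}, which pins every subsequential limit to the same element.
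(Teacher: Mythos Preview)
Your argument shares its second half with the paper's proof: both establish $L^2$-convergence of $p_n=\nabla_q f(t_n,q_n)$ to $p=\nabla_q f(t,q)$ by combining a Helly-type compactness for uniformly bounded monotone $S^D_+$-valued paths with Theorem~\ref{t.uniqueness}. (Both proofs therefore implicitly use the \emph{strict} convexity of $\xi$ required by Theorem~\ref{t.uniqueness}, even though the statement of Theorem~\ref{t.frechet} only assumes convexity.) The divergence is in how the sandwich controlling $f(t',q')-f(t,q)$ is obtained.

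The paper does \emph{not} use convexity of $f$ or of $\psi$. Writing $\msc P_t(\rho,\rho')=\psi(\rho+t\nabla\xi(\rho'))-t\int\theta(\rho')$, the Parisi formula $f(t,\cdot)=\sup_{p'}\msc P_t(\cdot,p')$ immediately gives the two-sided bound
\begin{equation*}
\msc P_t(q_n,p)-\msc P_t(q,p)\ \leq\ f(t,q_n)-f(t,q)\ \leq\ \msc P_t(q_n,p_n)-\msc P_t(q,p_n),
\end{equation*}
and then the Fréchet differentiability of $\psi$ with $L^2$-Lipschitz gradient (\cite[Corollary~5.2]{chenmourrat2023cavity}) controls both sides once $p_n\to p$ in $L^2$.

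Your route instead rests on the joint convexity of $f$, which you reduce to convexity of $\psi$ on $\mcl Q_2$ with the $L^2$-linear structure. This is the gap: that convexity is not proved in the paper or in its references, and the ``Gaussian-interpolation'' mechanism you invoke does not deliver it in any obvious way. The covariance of $w^q$ is linear in $q$, so one can couple $w^{\lambda q_1+(1-\lambda)q_0}\stackrel{d}{=}\sqrt{\lambda}\,w^{q_1}+\sqrt{1-\lambda}\,w^{q_0}$, but the interpolated Hamiltonian is then \emph{not} a convex combination of the endpoint Hamiltonians (the coefficients are $\sqrt{\lambda},\sqrt{1-\lambda}$, not $\lambda,1-\lambda$), so H\"older does not apply and the second-derivative computation has no evident sign. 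The known convexity results for the Parisi functional---Auffinger--Chen and its vector extension in~\cite{chen2023parisipdevector}---are in the \emph{measure} variable $\mu=\law(q(U))$, whose linear structure is unrelated to that of $L^2$; the paper itself stresses in Appendix~\ref{s.auffingerchen} that these do not transfer. Without convexity your subgradient sandwich fails, and on the non-open domain $\mcl Q_{\infty,\uparrow}$ mere continuity of the Gateaux gradient does not by itself upgrade Gateaux to Fr\'echet differentiability.
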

Recall that from Theorem~\ref{t.gateaux} which is extracted from \cite{chenmourrat2023cavity} that the Gateaux derivative of $f$ satisfy 
\begin{align*}
    \partial_t f - \int \xi(\nabla_q f) = 0,
\end{align*}
everywhere on $(0,+\infty) \times \mcl Q_{\infty,\uparrow}$. Since the Gateaux and Fréchet derivatives coincide when they exist, this partial differential equation can be understood in the Fréchet sense as well.
\begin{proof}[Proof of Theorem~\ref{t.frechet}]
In this proof, for clarity, we fix $t \in (0,+\infty)$ and only show that $f(t,\cdot)$ is Fréchet differentiable at every $q \in \mcl Q_{\infty,\uparrow}$. The proof of the joint differentiability is the same but with more cumbersome notations, which we choose to omit here. For $\rho \in\mcl Q_2$ and $\rho'\in\mcl Q_\infty$, we write 
\begin{e*}
    \msc P_t(\rho,\rho') = \psi(\rho+ t \nabla \xi(\rho')) - t \int \theta(\rho').
\end{e*}
Before proceeding, we record some properties of $\msc P_t$. 
According to \cite[Corollary~5.2]{chenmourrat2023cavity}, $\psi$ is Fréchet differentiable on $\mcl Q_2$. Therefore, $\msc P_t(\cdot,\rho')$ is Fréchet differentiable at every $\rho \in \mcl Q_2$. We denote by $\nabla_\rho \msc P_t(\rho,\rho')$ its derivative at $\rho$, which has the expression:
\begin{e*}
    \nabla_\rho \msc P_t(\rho,\rho') = \nabla \psi(\rho+ t \nabla \xi(\rho')).
\end{e*}
Also, \cite[Corollary~5.2]{chenmourrat2023cavity} gives that $\nabla\psi$ is Lipschitz in $L^2$. Using this and the smoothness of $\xi$, there is a constant $C>0$ such that
\begin{align}\label{e.|nablaP-nablaP|<}
    \Ll|\nabla_\rho \msc P_t(\rho_1,\rho_1') - \nabla_\rho \msc P_t(\rho_2,\rho_2')\Rr|\leq C\Ll|\rho_1-\rho_2\Rr|_{L^2}+ Ct \Ll|\rho'_1-\rho'_2\Rr|_{L^2}
\end{align}
for every $\rho_1,\rho_2\in\mcl Q_2$ and $\rho'_1,\rho'_2\in \mcl Q\cap L^\infty_{\leq1}$. Notice that the boundedness on $\rho'_1$ and $\rho'_2$ is needed here.

According to Theorem~\ref{t.parisi}, we have 
\begin{e*}
    f(t,q) = \sup_{p \in \mcl Q_\infty} \msc P_t(q,p).
\end{e*}
Let $(q_n)_n$ be any sequence in $\mcl Q_{\infty,\uparrow} \setminus \{ q \}$ such that $q_n \to q$ in $L^2$. Let $p_n \in \mcl Q_\infty$ be the unique Parisi measure at $(t,q_n)$ given by Theorem~\ref{t.uniqueness}. The theorem also gives $p_n = \nabla f(t,q_n) \in \mcl Q \cap L^\infty_{\leq 1}$. 
By~\cite[Lemma~3.4]{chenmourrat2023cavity}, thanks to the monotonicity and boundedness of $p_n$ uniform in $n$, the sequence $(p_n)_n$ is pre-compact in $L^2$.
Let $p$ be a subsequential limit of $(p_n)_n$. Clearly, we still have $p\in \mcl Q\cap L^\infty_{\leq1}$. Passing to the limit in $f(t,q_n) = \msc P_t(q_n,p_n)$, we discover that $p$ is a Parisi measure at $(t,q)$. Since Theorem~\ref{t.uniqueness} ensures that such $p$ is unique, we deduce that the sequence $(p_n)$ converges in $L^2$ to the unique Parisi measure $p$ at $(t,q)$. It follows that 
\begin{align*}
    f(t,q_n) - f(t,q) &\leq \msc P_t(q_n,p_n) - \msc P_t(q,p_n) \\
    &= \int_0^1 \langle q_n-q, \nabla_\rho \msc P_t( \lambda q_n +(1-\lambda)q ,p_n) \rangle_{L^2} \d \lambda.
\end{align*}
We thus have
\begin{e*}
    \begin{split}
        f(t,q_n) - f(t,&q) - \langle q_n-q, \nabla_\rho \msc P_t(q ,p) \rangle_{L^2} \\ &\leq  \int_0^1 \langle q_n-q, \nabla_\rho \msc P_t( \lambda q_n +(1-\lambda)q ,p_n) - \nabla_\rho \msc P_t(q ,p) \rangle_{L^2} \d \lambda.
    \end{split}
\end{e*}
Now observe that
\begin{e*}
    \begin{split} &\int_0^1 \langle q_n-q, \nabla_\rho \msc P_t( \lambda q_n +(1-\lambda)q ,p_n) - \nabla_\rho \msc P_t(q ,p) \rangle_{L^2} \d \lambda \\ &\leq |q_n-q|_{L^2} \int_0^1 | \nabla_\rho \msc P_t( \lambda q_n +(1-\lambda)q ,p_n) - \nabla_\rho \msc P_t(q ,p) |_{L^2} \d \lambda. \end{split}
\end{e*}
Due to~\eqref{e.|nablaP-nablaP|<}, the integral on the right-hand side vanishes as $n \to +\infty$. As a result, we get 
\begin{e*}
    f(t,q_n) - f(t,q) - \langle q_n-q, \nabla_\rho \msc  P_t(q ,p) \rangle_{L^2} \leq o(|q_n-q|_{L^2}).
\end{e*}
In addition, we also have,
\begin{e*}
\begin{split}
     f(t,q_n) - f(t,&q) - \langle q_n-q, \nabla_\rho \msc P_t(q ,p) \rangle_{L^2} \\ &\geq \msc P_t(q_n,p) -\msc P_t(q,p) - \langle q_n-q, \nabla_\rho \msc P_t(q ,p) \rangle_{L^2} = o(|q_n-q|_{L^2}).
\end{split}       
\end{e*}
In conclusion, 
\begin{e*}
    f(t,q_n) = f(t,q) + \langle q_n-q, \nabla_\rho \msc P_t(q ,p) \rangle_{L^2} + o(|q_n-q|_{L^2}).
\end{e*}
By sequential characterization of the limit, we deduce that $f$ is Fréchet differentiable at $q$.
\end{proof}

\section{Uniqueness in the critical point representation} \label{s.unique crit point}

In \cite{chenmourrat2023cavity}, a representation of the limit free energy in terms of the critical points of a functional has been proven. More precisely, we consider the functional
\begin{e}\label{e.J_t,q(q',p)=}
    \mcl J_{t,q}(q',p) = \psi(q') + \langle p, q - q' \rangle_{L^2} +t \int \xi(p).
\end{e}
We say that $(q',p) \in \mcl Q_2 \times L^2$ is a critical point of $\mcl J_{t,q}$ when
\begin{e}\label{e.crit_relation}
    \begin{cases}
        q' = q + t \nabla \xi(p) \\
        p = \nabla \psi(q')
    \end{cases}.
\end{e}
A consequence of the main result \cite[Theorem~1.2]{chenmourrat2023cavity} is that, for every $(t,q) \in \R_+ \times \mcl Q_2$ there exists $(q',p) \in \mcl Q_\infty^2$ that is a critical point of $\mcl J_{t,q}$ and such that 
\begin{e*}
    f(t,q) = \mcl J_{t,q}(q',p).
\end{e*}
Using Theorem~\ref{t.uniqueness} we can show that this critical point representation for the free energy is in fact unique as soon as $\xi$ is strictly convex on $S^D_+$.
\begin{corollary}[Uniqueness in the critical point representation]
     Assume that $\xi$ is strictly convex on $S^D_+$ and is superlinear on $S^D_+$. Let $(t,q) \in (0,+\infty) \times \mcl Q_{\infty,\uparrow}$, there exists a \emph{unique} critical point $(q',p) \in \mcl Q_\infty$ of $\mcl J_{t,q}$ such that 
\begin{e*}
        f(t,q) = \mcl J_{t,q}(q',p).
    \end{e*}
\end{corollary}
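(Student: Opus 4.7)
The plan is to deduce this statement from the uniqueness of Parisi measures proved in Theorem~\ref{t.uniqueness}. Existence of a critical point of $\mcl J_{t,q}$ achieving the value $f(t,q)$ is already provided by \cite[Theorem~1.2]{chenmourrat2023cavity}, so only uniqueness requires argument.

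First, I would take any critical point $(q',p)\in\mcl Q_\infty\times\mcl Q_\infty$ of $\mcl J_{t,q}$ satisfying $\mcl J_{t,q}(q',p)=f(t,q)$ and substitute the first critical-point relation $q'=q+t\nabla\xi(p)$ from~\eqref{e.crit_relation} directly into the definition~\eqref{e.J_t,q(q',p)=}. Using the identity $p\cdot\nabla\xi(p)-\xi(p)=\theta(p)$ from~\eqref{e.theta=}, this reduces to
\begin{align*}
  \mcl J_{t,q}(q',p) = \psi\bigl(q+t\nabla\xi(p)\bigr) - t\int\theta(p),
\end{align*}
which is precisely the functional whose supremum over $\mcl Q\cap L^\infty_{\leq 1}$ defines $f(t,q)$ in Theorem~\ref{t.parisi}. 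Hence, provided that $p\in\mcl Q\cap L^\infty_{\leq 1}$, the path $p$ is a Parisi measure at $(t,q)$ in the sense of Definition~\ref{d.parisi_measure}.

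The main point to verify is therefore that the $p$-component of any such optimal critical point actually lies in $\mcl Q\cap L^\infty_{\leq 1}$. The second critical-point relation $p=\nabla\psi(q')$ combined with the standard representation of $\nabla\psi$ as an expected overlap $\E\langle\sigma\tau^*\rangle$ of independent replicas drawn from a $1$-body Gibbs measure supported in the unit ball of $\R^D$ gives both the nondecreasing càdlàg structure and the pointwise bound $|p(u)|\leq 1$. These are properties of $\psi$ already established in \cite{chenmourrat2023cavity}, and I expect this verification to be the only nontrivial step; the rest is bookkeeping.

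Once $p\in\mcl Q\cap L^\infty_{\leq 1}$ is in hand, Theorem~\ref{t.uniqueness} forces $p=\nabla_q f(t,q)$, which determines $p$ uniquely. The first critical-point relation then determines $q'=q+t\nabla\xi(p)$ uniquely from $p$, completing the proof of uniqueness.
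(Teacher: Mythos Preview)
Your proposal is correct and follows essentially the same approach as the paper: substitute the first critical-point relation into $\mcl J_{t,q}$ to recognize the Parisi functional, conclude that $p$ is a Parisi measure, and then invoke Theorem~\ref{t.uniqueness} to pin down $p$ and hence $q'$. The only difference is that you explicitly verify $p\in\mcl Q\cap L^\infty_{\leq 1}$ via the relation $p=\nabla\psi(q')$, which the paper leaves implicit; this extra care is warranted, since Definition~\ref{d.parisi_measure} requires membership in $\mcl Q\cap L^\infty_{\leq 1}$ and the hypothesis $(q',p)\in\mcl Q_\infty^2$ alone does not give the bound $|p|\leq 1$.
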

\begin{proof}
Let $(q',p) \in \mcl Q_\infty^2$ be a critical point of $\mcl J_{t,q}$. Using $q'-q  = t \nabla \xi(p)$, we get 
\begin{align*}
    \mcl J_{t,q}(q',p) 
   &\stackrel{\eqref{e.J_t,q(q',p)=}}{=} \psi(q+t \nabla \xi(p)) - t \left( \langle p, \nabla \xi(p) \rangle_{L^2} - \int \xi(p) \right) \\
   &\stackrel{\eqref{e.theta=}}{=}\psi(q+t \nabla \xi(p)) - t \int \theta(p).
\end{align*}
If we further assume that $f(t,q) = \mcl J_{t,q}(q',p)$, then we obtain 
\begin{e*}
    f(t,q)= \psi(q+t \nabla \xi(p)) - t \int \theta(p).
\end{e*}
This means that $p$ is a Parisi measure, which by Theorem~\ref{t.uniqueness} imposes that $p = \nabla f(t,q)$. By the critical point condition~\eqref{e.crit_relation}, we also have $q' = q +t \nabla \xi(\nabla f(t,q))$, which uniquely characterizes $(q',p)$.
\end{proof}

\appendix

\section{Uniqueness of Parisi measures for scalar spins} \label{s.auffingerchen}

In this section, we expand on the proof of the uniqueness of Parisi measures at $(t,0)$ in the case of scalar spins (i.e.\ $D =1$) given in \cite{auffinger2015parisi}. More precisely, using the Kantorovich duality from optimal transport, we show that this proof also yields (still in the case of scalar spins) the uniqueness of Parisi measures at $(t,q)$ for any $q \in \mcl Q_\infty$.

As written, the proof of the strict concavity of the Parisi functional given in \cite{auffinger2015parisi} assumes that $P_1$ is the uniform measure on $\{-1,1\}$. We will make this assumption here as well, but it seems that with a bit more care some more general reference measures can be considered using the results of \cite[Theorem~4.6]{chen2023parisipdevector}.  

When $D = 1$, we can obtain \eqref{e.Hopf--Lax} (as done in~\cite[Proposition~A.3]{chen2022hamilton}) using an elementary rearrangement argument instead of relying on the Gateaux differentiability of $f$ as in the proof of Theorem~\ref{t.uniqueness}. 
In this setting, it has been observed that at $q = 0$ the Parisi formula can be recast into a strictly concave optimization problem and admits a unique maximizer \cite{auffinger2015parisi}. Using~\eqref{e.Hopf--Lax}, this concavity, and the convexity of the optimal transport cost between probability measures, we show that the uniqueness of Parisi measures at every $q \in \mcl Q_\infty$. Recall the notion of Parisi measures in Definition~\ref{d.parisi_measure}.
\begin{proposition}[Uniqueness of Parisi measures for scalar spins]
    Assume that $D = 1$ and that $P_1$ is the uniform probability measure on $\{-1,1\}$. For every $(t,q) \in (0,+\infty) \times \mcl Q_\infty$ there is a unique Parisi measure at $(t,q)$. 
\end{proposition}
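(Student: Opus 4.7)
The plan is to reduce the uniqueness of Parisi measures at $(t,q)$ to a strict-concavity statement, using the Hopf--Lax representation of $f(t,q)$. For $D=1$, the Hopf--Lax formula
\begin{equation*}
    f(t,q) = \sup_{q' \in \mcl Q_\infty} \Ll\{\psi(q') - t\int \xi^*\Ll(\frac{q'-q}{t}\Rr)\Rr\}
\end{equation*}
can be established via an elementary rearrangement argument as in \cite[Proposition~A.3]{chen2022hamilton}, without recourse to Gateaux differentiability of $f$, so it holds for every $q \in \mcl Q_\infty$. If $p$ is a Parisi measure at $(t,q)$, then by Lemma~\ref{l.theta} the path $q' := q + t\nabla\xi(p)$ is a maximizer of the Hopf--Lax functional. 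Since $\xi$ is strictly convex, $\nabla\xi$ is injective, so distinct Parisi measures produce distinct Hopf--Lax maximizers; hence uniqueness of $p$ reduces to uniqueness of the maximizer $q' \in \mcl Q_\infty$.

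The next step is to identify a nondecreasing path $q'\in\mcl Q$ with the probability measure $\mu_{q'}$ on $\R_+$ of which it is the quantile function, and to show that the Hopf--Lax functional, viewed as a function of $\mu_{q'}$, is strictly concave. Strict concavity of $\mu' \mapsto \psi(q_{\mu'})$ when $P_1 = \mathrm{Unif}(\{-1,1\})$ is (the essential content of) the strict concavity established in \cite{auffinger2015parisi} at $q = 0$. For the $q$-dependent transport term, the classical one-dimensional optimal transport formula identifies $t\int \xi^*((q'-q)/t)$ with the optimal transport cost between $\mu$ and $\mu'$ for the convex cost $c(x,y) = t\xi^*((y-x)/t)$, the monotone coupling being optimal for any convex cost in dimension one. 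By Kantorovich duality, the optimal transport cost is a supremum of affine functionals of the marginals, hence is convex in $\mu'$. The difference of a strictly concave and a convex functional is strictly concave in $\mu'$, yielding the desired uniqueness of the Hopf--Lax maximizer.

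The main obstacle is to extract from \cite{auffinger2015parisi} the strict concavity of $\psi$ alone in $\mu'$, separated from the accompanying $-t\int \xi^*(q'/t)$ term present at $q = 0$, since under the $q$-perturbation the transport term only supplies (non-strict) convexity and all the strictness has to come from $\psi$. If this disentanglement proves delicate, an equivalent route is to redo the argument of \cite{auffinger2015parisi} directly on the Hopf--Lax functional at general $q$: their proof rests on a strict convexity identity coming from the Poisson--Dirichlet cascade structure encoded in $\psi$, and that identity together with the convexity of the transport term (which is preserved under the shift $q' \mapsto q'-q$) should still deliver strict concavity of the full Hopf--Lax functional. This is the ``minimal adaptation'' referred to in the introduction.
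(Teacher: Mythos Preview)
Your proposal is correct and matches the paper's proof almost step for step: reparametrize by $\mu' = \mathrm{Law}(q'(U))$, identify $t\int\xi^*((q'-q)/t)$ with an optimal-transport cost (the paper cites \cite[Proposition~2.5]{mourrat2020free} for the monotone-coupling optimality you invoke), use Kantorovich duality to get convexity of that cost in $\mu'$, and appeal to \cite[Theorem~2]{auffinger2015parisi} for the strict concavity of $\mu'\mapsto\psi(G^{-1}(\mu'))$. Your last-paragraph worry is unnecessary: Theorem~2 of \cite{auffinger2015parisi} is precisely the strict convexity of the Parisi PDE solution as a function of the measure --- the $\theta$-type term in the standard Parisi functional at $q=0$ is \emph{linear} in the measure, so the strictness was never entangled with it --- and after the routine identification of their PDE solution with $-\psi$, this is exactly the strict concavity of $\psi$ alone that you need, with no disentanglement or rerunning of their argument required.
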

\begin{proof}
Let $\mcl P(\R_+)$ denote the set of probability measures on $\R_+$. We denote by $\mcl P_2(\R_+)$ (respectively, $\mcl P_\infty(\R_+)$) the set of probability measures on $\R_+$ with finite second moments (respectively, compact supports). We equip $\mcl P_2(\R_+)$ with $W_2$ the Wasserstein distance defined by
\begin{e*}
   W_2(\mu',\mu) = \left( \inf_{\pi \in \Pi(\mu',\mu)} \int |x'-x|^2 \d \pi(x',x) \right)^{1/2}
\end{e*}
where $\Pi(\mu',\mu)$ denotes the set of probability measures $\pi \in \mcl P_2(\R_+ \times \R_+)$ with first marginal $\mu'$ and second marginal $\mu$. Let $U$ be a uniform probability measure on $[0,1)$. We consider the map 
\begin{e*}
    G : \begin{cases}
        \mcl Q_2 \to \mcl P_2(\R_+) \\
        q \mapsto \law(q(U))
    \end{cases}.
\end{e*}
The functional $G$ is an isometric bijection between the metric spaces $(\mcl Q_2,|\,\cdot~\,|_{L^2})$ and $(\mcl P_2(\R_+),W_2)$. It has been observed in \cite[Theorem~2]{auffinger2015parisi} that the map $\mu \mapsto \psi(G^{-1}(\mu))$ is strictly concave. More precisely, for every $\mu_0,\mu_1 \in \mcl P_\infty(\R_+)$ and $\lambda \in [0,1]$ we have 
\begin{e*}
   \psi\Ll(G^{-1}(\lambda\mu_1 +(1-\lambda) \mu_0)\Rr) \geq \lambda \psi\Ll(G^{-1}(\mu_1)\Rr) + (1-\lambda) \psi\Ll(G^{-1}(\mu_0)\Rr) 
\end{e*}
with equality if and only if $\lambda = 0$, $\lambda = 1$, or $\mu_0 = \mu_1$.

Given $\mu',\mu \in \mcl P_2(\R_+)$, we denote by $\mcl T_t(\mu',\mu)$ the cost of the optimal transport between $\mu'$ and $\mu$ with respect to the cost function $(x',x) \mapsto t \xi^* \left( \frac{x'-x}{t}\right)$, that is, 
\begin{e*}
   \mcl T_t(\mu',\mu) =  \inf_{\pi \in \Pi(\mu',\mu)} \int t \xi^* \left( \frac{x'-x}{t}\right) \d \pi(x',x).
\end{e*}
According to \cite[Proposition~2.5]{mourrat2020free}, for every $q',q \in \mcl Q_2$ we have 
\begin{e}\label{e.T_t(G(q'),G(q))=}
   \mcl T_t(G(q'),G(q)) = \int_0^1 t \xi^* \left( \frac{q'(u)-q(u)}{t}\right) \d u.
\end{e}
In other words, the optimal coupling between measures $G(q')$ and $G(q)$ to minimize the aforementioned cost is achieved by the joint $(q'(U),q(U))$ where $U$ is the uniform random variable over $[0,1)$.

Fix any $(t,q) \in (0,+\infty) \times \mcl Q_\infty$ and set $\mu = G(q)$. The formula for $f$ obtained in \cite[Proposition~A.3]{chen2022hamilton} (which is simply \eqref{e.Hopf--Lax} but with a simpler proof) can be written as 
\begin{e*}
   f(t,q) = \sup_{\mu' \in \mcl P_\infty(\R_+)} \left\{ \psi(G^{-1}(\mu')) - \mcl T_t(\mu',\mu) \right\}.
\end{e*}

\noindent Step 1. We show that the right-hand side in the previous display is a strictly concave optimization problem. In particular, this optimization problem admits a unique maximizer.

\noindent The Kantorovich duality theorem (see for example \cite[Theorem~5.10]{villani2}) states that the optimal transport cost $\mcl T_t(\mu',\mu)$ admits the following dual representation 
\begin{e*}
   \mcl T_t(\mu',\mu) = \sup_{\chi',\chi} \left\{\int \chi' d\mu' - \int \chi d\mu \right\},
\end{e*}
where the supremum is taken over all pairs of functions $\chi,\chi' : \R_+\to \R$ such that $\chi'(x')- \chi(x) \leq (t \xi)^*(x'-x)$. When the optimal transport cost is written as in the previous display, it is the supremum of a family of linear functions in $(\mu',\mu)$. In particular, the function $(\mu',\mu) \mapsto \mcl T_t(\mu',\mu)$ is convex on $\mcl P_\infty(\R_+) \times \mcl P_\infty(\R_+)$. Therefore, since the map $\mu' \mapsto \psi(G^{-1}(\mu'))$ is strictly concave according to \cite[Theorem~2]{auffinger2015parisi}, we have that for every $\mu \in \mcl P_\infty(\R_+)$, the map $\mu' \mapsto \psi(G^{-1}(\mu')) - \mcl T_t(\mu',\mu)$ is strictly concave on $\mcl P_\infty(\R_+)$.

\noindent Step 2. We show that there is a unique Parisi measure at $(t,q)$.

\noindent Let $p$ be a Parisi measure at $(t,q)$, let $q' = q + t \nabla \xi(p)$, and let $\mu' = G(q')$. We have
\begin{align*}
   f(t,q) &\stackrel{\text{D.\ref{d.parisi_measure}}}{=} \psi(q +t \nabla \xi(p)) - t \int \theta(p) \\ 
          &\stackrel{\text{L.\ref{l.theta}}}{=} \psi(q') - \int_0^1 t \xi^* \left( \frac{q'(u)-q(u)}{t}\right) \d u \\
          &\stackrel{\eqref{e.T_t(G(q'),G(q))=}}{=} \psi(G^{-1}(\mu')) -  \mcl T_t(\mu',\mu).
\end{align*}
So, $\mu'$ is the unique maximizer in the variational formula of Step 1. In addition, according to Lemma~\ref{l. diff xi^*}, we have 
\begin{e*}
   p = \nabla \xi^*(\nabla \xi(p)) = \nabla \xi^* \left( \frac{q'-q}{t} \right) = \nabla \xi^* \left( \frac{G^{-1}(\mu')-q}{t} \right),
\end{e*}
which characterizes $p$ uniquely.
\end{proof}

Finally, we explain the difficulty that arises when one tries to generalize the arguments of \cite{auffinger2015parisi} to $D > 1$. The argument relies on the fact that through the change of variable $q \mapsto \text{Law}(q(U))$, the Parisi formula becomes a strictly concave optimization problem, and thus it admits only one maximizer. Let $\mcl P^\upa(S^D_+)$ denote the image of $\mcl Q$ by the map $p \mapsto \text{Law}(q(U))$. When $D = 1$, $\mcl P^\upa(S^D_+)$ is simply the set of probability measures on $\R_+$ which is a convex set (under the linear structure on the set of signed measures). When $D > 1$, $\mcl P^\upa(S^D_+)$ is the set of probability measures with totally ordered support \cite[Proposition~5.9]{issa2024hopflike} and it is not convex. For example when $D = 2$, letting 
\begin{e*}
    A = \begin{pmatrix} 1 & 0 \\ 0 & 0 \end{pmatrix} \quad\text{and}\quad B = \begin{pmatrix} 0 & 0 \\ 0 & 1 \end{pmatrix},
\end{e*}
one can check that $\delta_A$ and $\delta_B$ belong to $\mcl P^\upa(S^2_+)$ but not $(\delta_A + \delta_B)/2$. This prevents any easy generalization of the strict concavity argument from \cite{auffinger2015parisi} to the case of vector spins (i.e. $D >1$).

\newpage

\small
\bibliographystyle{plain}
\newcommand{\noop}[1]{} \def\cprime{$'$}

\end{document}